\setlist[enumerate]{leftmargin=1.2em}
\setlist[itemize]{leftmargin=1.2em}
\definecolor{green}{rgb}{0,0.8,0} 
\newtheorem{theorem}{Theorem}[section]
\newtheorem{corollary}[theorem]{Corollary}
\newtheorem{lemma}[theorem]{Lemma}
\newtheorem{proposition}[theorem]{Proposition}
\theoremstyle{definition}
\newtheorem{definition}[theorem]{Definition}
\theoremstyle{remark}
\newtheorem{remark}[theorem]{Remark}
\numberwithin{equation}{section}
\newcommand{\nnrm}[1]{{\vert\kern-0.25ex\vert\kern-0.25ex\vert #1 
		\vert\kern-0.25ex\vert\kern-0.25ex\vert}}
\newcommand{\supp}{{\mathrm{supp}}\,}
\newcommand{\varep}{\varepsilon}
\newcommand{\omg}{\omega}
\newcommand{\ep}{\varepsilon}
\newcommand{\la}{\lambda}
\newcommand{\R}{\mathbb{R}}
\begin{document}
	\bibliographystyle{plain}
	\title{Existence and Stability of  the Lamb Dipoles  for the
		Quasi-Geostrophic Shallow-Water Equations} 
	\author{ Shanfa Lai\thanks{Institute of Applied Mathematics, Chinese Academy of Sciences, Beijing
			100190, and University of Chinese Academy of Sciences, Beijing 100049,
			P.R. China. Email: laishanfa@amss.ac.cn. }
		\and Guolin Qin\thanks{Institute of Applied Mathematics, Chinese Academy of Sciences, Beijing
			100190, and University of Chinese Academy of Sciences, Beijing 100049,
			P.R. China. Email: qinguolin18@mails.ucas.ac.cn.}
		\and Weicheng Zhan \thanks{ School of Mathematical Sciences, Xiamen University, Xiamen 361005, Fujian, People’s Republic of
			China. Email: zhanweicheng@amss.ac.cn.}
	}	
	
	\date\today
	
	\maketitle

	\renewcommand{\thefootnote}{\fnsymbol{footnote}}
	\footnotetext{\emph{Key words:} The QGSW
		equations, Vortex dipole, Orbital stability, Variational approach.\\
		\quad \emph{2020  Mathematics Subject Classification:} Primary: 76B47; Secondary: 76B03, 35A02, 35Q31.}
	\renewcommand{\thefootnote}{\arabic{footnote}}

	\begin{abstract} 
		In this paper, we prove the nonlinear orbital stability of vortex dipoles for the quasi-geostrophic shallow-water (QGSW) equations. The vortex dipoles are explicit travelling wave solutions to the QGSW equations, which are analogues of the classical circular vortex of Lamb and Chaplygin for the steady planar Euler equations. We establish a variational characterization of these vortex poles, which provides a basis for the stability result.
	\end{abstract}
	

	\section{Introduction and main results}
	
	In this paper,  we investigate the quasi-geostrophic shallow-water equation which is a nonlinear and nonlocal transport equation generalizing the two-dimensional Euler equations and used to describe large-scale motion for the atmosphere and the ocean circulation.

	\subsection{The quasi-geostrophic shallow-water equations}

	The quasi-geostrophic shallow water (QGSW) equations are derived asymptotically  from the rotating shallow-water equations, in the limit of rapid rotation and weak variations of the free surface \cite{vallis2017atmospheric}, which are given by
	\begin{equation}\label{1.1}
		\left\{ \begin{array}{l}
			\partial _{t}q+v\cdot \nabla q=0, \quad   (t,x)\in \mathbb{R} _+\times \mathbb{R} ^2,\\
			v=\nabla ^{\bot}\psi,\quad  \nabla^{\perp}=(\partial_{2}, -\partial_{1}),\\
			\psi =\left( -\Delta +\varepsilon ^2 \right) ^{-1}q,\\
			q_{\mid {t}=0}=q_0,\\
		\end{array} \right.
	\end{equation}
	where $ v $ is the velocity field, $ q$ is the `potential' vorticity, $ \psi $  is the stream function, and $ \varep \ge 0$ is a parameter.
	
	When the parameter $ \ep=0 $, we recover the two-dimensional Euler equations. The QGSW equations are a generalisation of the Euler equations and contain an additional parameter $\ep$.  The parameter $\ep$ is known as the inverse ‘Rossby deformation length’, which is a natural length scale arising from a balance between rotation and stratification.

	\subsection{The Lamb dipole} \label{sect1.2}
	The Lamb dipole is a special translational vortex pair, which has  a steady translating structure with opposite-sign vorticity of compact support in a circular disk. Translating vortex pairs are theoretical models of coherent vortex structures in large-scale geophysical flows; see \cite{flor1994experimental, van1989dipole}. Let us assume that a travelling wave solution is of the form
	\begin{align*}
		v\left( x,t \right) &=u\left( x+u_{\infty}t \right) -u_{\infty},
		\\
		q\left( x,t \right) &=\omega \left( x+u_{\infty}t \right) ,
	\end{align*}
	with a constant velocity $ u_{\infty}\in \mathbb{R}^{2} $ at space infinity. Vortex pairs are pairs of compactly supported dipoles, symmetrically placed with opposite signs, translating in one direction. Without loss of generality, we may assume that $ u_{\infty}=(-W, 0),\ W>0 $ by rotation invariance of \eqref{1.1}. Substituting $ (v, q) $ into equation \eqref{1.1}, we obtain the steady QGSW equations for $(u, \omega)$ in the half plane $ \Pi:=\{x=(x_{1}, x_{2})\in \mathbb{R}^{2}\,\mid\,x_{2}>0 \} $:
	\begin{equation}\label{1.33}
		\begin{split}
			u\cdot \nabla \omega =0, \quad  &\mathrm{in} \, \, \Pi ,
			\\
			u\rightarrow u_{\infty} \quad  &\mathrm{as} \, \, |x|\rightarrow \infty .
		\end{split}
	\end{equation}
	In 1906, Lamb \cite{Lamb1906} noted an explicit vortex pair solution to the two-dimensional Euler equaitons (i.e., $\ep=0$ in \eqref{1.1}), a solution $ \omega _{\mathrm{C}}=\lambda\left(\Psi_{\mathrm{C}}(x)-W x_{2}\right)_+,  u_{\mathrm{C}}=\nabla^{\perp}\Psi_{\mathrm{C}} -W\mathrm{e}_1,  0<\la<\infty$, of the form in the polar coordinate $(r, \theta)$
	\begin{equation}\label{Lamb}
		\Psi_{\mathrm{C}}(x)= \begin{cases}\left(A_{\mathrm{C}} J_{1}\left(\lambda^{1 / 2} r\right)+Wr\right) \sin \theta, & r \leq a, \\ \frac{a^{2}}{r} \sin \theta, & r>a,\end{cases}
	\end{equation}
	with the constants
	$$
	A_{\mathrm{C}}=-\frac{2 W}{\lambda^{1 / 2} J_{0}\left(c_{0}\right)}, \quad a=c_{0} \lambda^{-1 / 2},
	$$
	where $J_{m}(r)$ is the $m$-th order Bessel function of the first kind and the constant $c_{0}$ is the first zero point of $J_{1}$, i.e., $J_{1}\left(c_{0}\right)=0, c_{0}=3.8317 \cdots, J_{0}\left(c_{0}\right)<0$, $f_{+}$ denotes the positive part of $f$, and  $\mathrm{e}_1=(1,0)$. This explicit solution is indeed a special case of non-symmetric	Chaplygin	dipoles, independently founded by S. A. Chaplygin	\cite{chaplygin1903one, chaplygin2007one, meleshko1994chaplygin}. So it is now generally referred to as the Lamb dipole or Chaplygin–Lamb dipole. The stream function $ \Psi_{\mathrm{C}} $ satisfies the following elliptic equation
	\begin{equation}\label{add1}
		\left\{\begin{array}{l}
			-\Delta \Psi=\lambda\left(\Psi-W x_{2}\right)_+,\quad \quad  \text { in } \,  \Pi, \\
			\Psi \rightarrow 0 \text { as } r \rightarrow \infty, \quad \Psi=0, \, \, \, \text { on }  \, \partial \Pi, \\
			\Psi\left(x_{1}, x_{2}\right)=-\Psi\left(x_{1},-x_{2}\right), \quad \forall\, x \in \mathbb{R}^{2}.
		\end{array}\right.
	\end{equation}
	The Lamb dipole $\omega_{\mathrm{C}}$ has the form
	\begin{equation*}
		\omega_{\mathrm{C}}\left(x_{1}, x_{2}\right)=-\omega_{\mathrm{C}}\left(x_{1},-x_{2}\right)=\lambda\left(\Psi_{\mathrm{C}}(x)-W x_{2}\right)_+, \quad \forall\, x \in \Pi .
	\end{equation*}
	In 1996, Burton \cite{burton1996uniqueness} proved that $\Psi_{\mathrm{C}}$ is the unique solution to \eqref{add1} when viewed in a natural weak formulation by using the method of moving planes. Very recently, Abe and Choi \cite{abe2022stability} established nonlinear orbital stability of the Lamb dipole $\omega_{\mathrm{C}}$. For some numerical and experimental studies on stability, see \cite{flor1994experimental, van1998viscous}.
	
	In the present paper, we are interested in the Lamb dipole for the QGSW equations \eqref{1.1} with $\ep>0$. Without loss of generality, we shall restrict our attention to the case $\ep=1$. Let $ \Psi =(-\Delta+Id)^{-1}\omega $ and $\mathrm{e}_1=(1,0)$, then \eqref{1.33} can be rewritten as
	\begin{equation*}
		\left( \nabla ^{\bot}\Psi -W\mathrm{e}_1 \right) \cdot \nabla \omega =0,
	\end{equation*}
	which is equivalent to
	\begin{equation}\label{1.6}
		\nabla ^{\bot}\left( \Psi -Wx_2 \right) \cdot \nabla \omega =0.
	\end{equation}
	As remarked by V. I. Arnol'd \cite{arnol2013mathematical}, a natural way of obtaining solutions to the stationary
	problem \eqref{1.6} is to impose  that  $ \Psi -Wx_2 $ and $ \omega  $ are (locally) functional dependent. Inspired by \eqref{add1}, we assume that
	\begin{equation*}
		\omega =\lambda \left( \Psi -Wx_2 \right) _+\,\,\mathrm{in} \, \Pi
	\end{equation*}
	for some constant $\lambda$. The problem is thus transformed into finding a solution to the following problem
	\begin{equation}\label{1.7}
		\left\{\begin{array}{l}
			-\Delta \Psi+\Psi=\lambda\left(\Psi-W x_{2}\right)_{+},\quad \text {in } \,  \Pi, \\
			\Psi \rightarrow 0 \text { as } r \rightarrow \infty, \quad \Psi=0, \quad  \text { on } \partial \Pi, \\
			\Psi\left(x_{1}, x_{2}\right)=-\Psi\left(x_{1},-x_{2}\right), \quad  \forall\, x \in \R ^{2}.
		\end{array}\right.
	\end{equation}
	A solution of \eqref{1.7} can be easily found by using the separation of variables method. Indeed,
	let $1<\lambda<\infty$ and
	\begin{equation}\label{1.8}
		\Psi _L(x)=\begin{cases}
			\left( A_LJ_1((\lambda -1)^{1/2}r)+\frac{W\lambda}{\lambda -1}r \right) \sin \theta , &		r\le a,\\
			\frac{Wa}{K_1(a)}K_1(r)\sin \theta , &		r>a,\\
		\end{cases}
	\end{equation}
	where $ J_{1}(r) $ is the Bessel function of the first kind of order one,  $ K_{1} (r) $ is  the modified Bessel function of the second kind of order one,
	\begin{equation*}
		A_{L}=-\frac{Wa}{\la-1}\cdot \frac{1}{J_{1}((\la-1)^{1/2}a)},
	\end{equation*}
	and  $a$ be the smallest positive solution satisfying \eqref{1.99}
	\begin{equation}\label{1.99}
		a\left(\frac{K_{1}'(a)}{K_{1}(a)}+\frac{1}{(\la-1)^{1/2}}\cdot\frac{J_{1}'((\la-1)^{1/2}a)}{J_{1}((\la-1)^{1/2}a)}\right)=\frac{\la}{\la-1}.
	\end{equation}
	Then $\Psi_{L}$ is a desired solution of \eqref{1.7}. Moreover, $ \omega _{L}=\lambda \left( \Psi_L -Wx_2 \right) _+,  u_{L}=\nabla^{\perp}\Psi_{L}-W\mathrm{e}_1$ is an explicit solution to \eqref{1.33}. Its vorticity is positive inside a semicircular region, while outside this region the flow is irrotational. In conjunction with its reflection in the $x_1$-axis, this flow constitutes a circular vortex. We shall call this solution the Lamb dipole to the QGSW equations. It seems that limited work has been done for the Lamb dipole to the QGSW equations. There are some analytical and numerical studies of the vortex patch solution to the QGSW equations. Polvani \cite{polvani1988geostrophic} and Polvani, Zabusky and Flierl \cite{polvani1989two} computed the generalizations of Kirchhoff ellipses under various values of $\ep$,  including doubly-connected patches and multi-layer flows. Later, Plotka and Dritschel \cite{plotka_quasi-geostrophic_2012} numerically studied the equilibrium form and stability of the rotating simply-connected vortex patches for the QGSW equations. Very recently,  Dritschel, Hmidi and Renault \cite{dritschel_imperfect_2019}  investigated  both analytically and numerically the bifurcation diagram of simply-connected rotating vortex patch equilibria for the QGSW equations.

	The main purpose of this paper is to study the dynamical stability of the Lamb dipole for the QGSW equations. More precisely, we will establish the nonlinear orbital stability of the Lamb dipole $\omega _{L}$.

	\subsection{The main result}
	
	Similar to Burton \cite{burton_nonlinear_2013}, we introduce the following  $L^{p}$-regular solution:	
	\begin{definition}\label{def:1.3}
		For the function $\zeta \in L_{l o c}^{\infty}\left([0, \infty), L^{1}\left( \mathbb{R} ^{2}\right)\right) \cap L_{l o c}^{\infty}\left([0, \infty), L^{p}\left( \mathbb{R} ^{2}\right)\right)$ is called a $L^{p}$-regular solution of \eqref{1.1},   if $\zeta$ satisfies (\eqref{1.1}  in the sense of distributions, such that $E(\zeta(t, \cdot)), I(\zeta(t, \cdot))$ and $\|\zeta(t, \cdot)\|_{L^s}$ for $1 \leq s \leq p$ are constant for $t \in[0, \infty)$.
		Moreover, if $\zeta_{0}$ is non-negative and odd symmetric in $x_{2}$, then we require that $\zeta(t, \cdot)$ is also non-negative and odd symmetric in $x_{2}$.
	\end{definition}
	
	Roughly speaking, the $ L^{p}$-regular solution is a weak solution of \eqref{1.1}, whose kinetic energy, impulse, and $L^{s} $ norm are conserved when $ 1\le s\le p $. This is true for sufficiently smooth solutions.
	In the sequel, we identify a function $\zeta$ in $\Pi$ with an odd extension to $\mathbb{R}^2$ for the $x_2$-variable, i.e.,  $\zeta(x_1, x_2) = -\zeta(x_1, -x_2)$. We shall denote $\|\zeta\|_{L^1\cap L^2}:=\|\zeta\|_{1}+\|\zeta\|_2$.
	
	We have the following stability result:
	\begin{theorem}\label{thm1.4}
		The Lamb dipole $\omega_{L}$ is orbitally stable in the sense that for any $\varepsilon>0$, there exists $\delta>0$ such that for any non-negative function $\zeta_{0} \in L^{1} \cap L^{2}(\Pi)$ and
		\begin{equation*}
			\inf _{c \in \mathbb{R} }\left\{\left\|\zeta_{0}-\omega_{L}\left(\cdot+c \mathrm{e} _{1}\right)\right\|_{L^1\cap L^2}+\left\|x_{2}\left(\zeta_{0}-\omega_{L}\left(\cdot+c\mathrm{e} _{1}\right)\right)\right\|_{L^1}\right\} \leq \delta,
		\end{equation*}
		if there exists a $L^{2}$-regular solution $\zeta(t)$ with initial data ${\zeta_{0}}$, then
		\begin{equation*}
			\inf _{c \in \mathbb{R} }\left\{\left\|\zeta(t)-\omega_{L}\left(\cdot+c\mathrm{e} _{1}\right)\right\|_{L^1\cap L^2}+\left\|x_{2}\left(\zeta(t)-\omega_{L}\left(\cdot+c\mathrm{e} _{1}\right)\right)\right\|_{L^1}\right\} \leq \varepsilon, \quad \forall\, t \in[0, \infty).
		\end{equation*}
		
	\end{theorem}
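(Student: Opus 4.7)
The plan is to characterize the Lamb dipole $\omega_L$ variationally as the unique (up to horizontal translation) maximizer of an appropriate pseudo-energy functional over a class fixed by the conserved quantities of \eqref{1.1}, and then promote this into orbital stability via a compactness--contradiction argument that exploits the conservation of kinetic energy, impulse and $L^s$-norms ($1\le s\le 2$) built into Definition \ref{def:1.3}.

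\textbf{Variational setup and identification.} Guided by the relation $\omega_L=\lambda(\Psi_L-Wx_2)_+$, I would introduce the penalized functional
\[ \Phi(\zeta) \,:=\, \tfrac12\int_\Pi \zeta\,(-\Delta+Id)^{-1}\zeta\,dx \;-\; W\int_\Pi x_2\,\zeta\,dx \;-\; \tfrac{1}{2\lambda}\int_\Pi \zeta^2\,dx \]
on the admissible class $\mathcal{K}$ of non-negative $\zeta\in L^1\cap L^2(\Pi)$ with $\|\zeta\|_1=\|\omega_L\|_1$ and $\|\zeta\|_2=\|\omega_L\|_2$ (extended oddly in $x_2$). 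A critical point satisfies $\zeta=\lambda(\Psi[\zeta]-Wx_2)_+$ on $\Pi$, i.e.\ precisely the semilinear problem \eqref{1.7}. Existence of a maximizer would proceed by the concentration--compactness principle applied in the horizontal variable: the exponential decay of the kernel $K_0$ of $(-\Delta+Id)^{-1}$ rules out dichotomy, while the penalty $-W\int x_2\zeta$ combined with the sign constraint prevents vertical escape. A maximizing sequence thus converges strongly, after an $x_1$-translation, to some $\zeta^{\ast}\in\mathcal{K}$ satisfying \eqref{1.7}, and a Burton-type moving-planes argument \cite{burton1996uniqueness} adapted to the strictly elliptic operator $-\Delta+Id$ (whose fundamental solution is positive, radial and strictly decreasing) identifies $\zeta^{\ast}$ with $\omega_L(\cdot+c\mathrm{e}_1)$ for some $c\in\mathbb{R}$.

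\textbf{From the variational characterization to stability.} Suppose for contradiction that there exist $\varepsilon_0>0$, initial data $\zeta_{0,n}$ converging to $\omega_L$ in the norm of Theorem \ref{thm1.4}, and times $t_n\ge 0$ with $\zeta_n(t_n)$ uniformly bounded away from the orbit $\{\omega_L(\cdot+c\mathrm{e}_1)\}_{c\in\mathbb{R}}$. Conservation of $\|\cdot\|_1,\|\cdot\|_2,E,I$ along the $L^2$-regular flow, together with continuity of $\Phi$ in $L^1\cap L^2$ (the kernel $K_0$ lies in $L^p$ near infinity for every $p$ and is only logarithmically singular at the origin), forces $\Phi(\zeta_n(t_n))\to\Phi(\omega_L)=\sup_{\mathcal{K}}\Phi$. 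Hence $\{\zeta_n(t_n)\}$ is itself a nearly maximizing sequence on a slight enlargement of $\mathcal{K}$ obtained by relaxing the $L^1$, $L^2$ and impulse constraints by $o(1)$ -- which is where the smallness of $\|x_2(\zeta_0-\omega_L(\cdot+c\mathrm{e}_1))\|_1$ enters to control $|I(\zeta_{0,n})-I(\omega_L)|$. The compactness step then furnishes a subsequential translate converging to $\omega_L$ in $L^1\cap L^2$, contradicting the separation.

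\textbf{Main obstacle.} The principal difficulty will be the concentration--compactness step, since horizontal translation invariance precludes tightness in $x_1$ while only the impulse in $x_2$ is directly controlled; the exponential decay of the Bessel kernel and the sign of the penalty $-W\int x_2\zeta$ are what make it tractable. A secondary but genuinely technical point is the moving-planes uniqueness for \eqref{1.7}: the free-boundary profile $(\Psi-Wx_2)_+$ together with the semicircular support determined by the transcendental equation \eqref{1.99} requires care in adapting Burton's argument from $-\Delta$ to $-\Delta+Id$, where one must replace the logarithmic kernel by the Bessel potential and verify the requisite reflection-monotonicity of the Green function across horizontal planes.
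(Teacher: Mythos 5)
Your overall strategy---a variational characterization of $\omega_L$ as a constrained maximizer, concentration--compactness to obtain compactness of maximizing sequences modulo $x_1$-translation, and a contradiction argument exploiting the conserved quantities of an $L^2$-regular solution---is exactly the paper's route (Sections \ref{sec1.5}--\ref{sect5}), and your final stability step mirrors Theorem \ref{lm5.1} almost verbatim.

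There is, however, a genuine gap in your variational setup. You maximize $\Phi(\zeta)=E(\zeta)-W\int_\Pi x_2\zeta\,\mathrm{d}x-\frac{1}{2\lambda}\|\zeta\|_2^2$ over a class $\mathcal{K}$ defined by the \emph{equality} constraints $\|\zeta\|_1=\|\omega_L\|_1$ and $\|\zeta\|_2=\|\omega_L\|_2$, and assert that a maximizer satisfies $\zeta=\lambda(\mathcal{G}\zeta-Wx_2)_+$. On $\mathcal{K}$ the term $-\frac{1}{2\lambda}\|\zeta\|_2^2$ is constant and contributes nothing to the variation; the Euler--Lagrange relation of what remains carries two undetermined Lagrange multipliers, a constant $\gamma$ from the $L^1$ constraint and a coefficient $\beta$ from the $L^2$ constraint, yielding $\zeta=\beta^{-1}(\mathcal{G}\zeta-Wx_2-\gamma)_+$ on $\Pi$. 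Nothing in your argument forces $\beta=1/\lambda$ or $\gamma=0$; since both constraints are equalities, $\gamma$ may take either sign, and for $\gamma\neq 0$ the resulting profile is a different dipole, not a translate of $\omega_L$, so the identification of the maximizer with the Lamb dipole is not established. The paper avoids this by constraining the impulse ($\int_\Pi x_2\omega\,\mathrm{d}x=\mu$, with $W$ emerging as its multiplier), keeping the $L^1$ mass only as an inequality $\int_\Pi\omega\,\mathrm{d}x\le\nu$ that is shown to be inactive for $\nu\ge\nu_0$ (so that the associated multiplier satisfies $\gamma\ge 0$ and in fact $\gamma=0$, Lemmas \ref{lm3.1}--\ref{lm3.2}), and building the coefficient $1/\lambda$ into the functional instead of fixing $\|\zeta\|_2$. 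You would need an analogous control of both multipliers, or you should adopt the paper's normalization; this also simplifies your ``relax the constraints by $o(1)$'' step, which the paper formalizes in Theorem \ref{lm4.1} by allowing $\int_\Pi x_2\omega_n\,\mathrm{d}x\to\mu$ rather than exact equality.

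A secondary divergence concerns the symmetry and uniqueness of solutions of \eqref{1.7}: you propose adapting Burton's half-plane moving-planes argument to $-\Delta+Id$. The paper instead lifts the problem to $\mathbb{R}^4$ via $\phi(y)=\psi(x_1,x_2)/x_2$ with $x_2=|y'|$, which converts the Dirichlet condition on $\partial\Pi$ into an interior statement, applies the integral-form moving planes of Chen--Li--Ou to get radial symmetry about a point on the axis, and then pins down the profile by an explicit Bessel ODE computation (Lemma \ref{lm3.4} and Remark \ref{remark:3.5}, which also verifies solvability of \eqref{1.99}). Your route is plausible in principle, but the reflection argument in the half-plane with the free boundary meeting $\partial\Pi$ is precisely where the difficulty sits; the lifting trick circumvents it and is worth adopting.
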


	This paper is organized as follows. In Section \ref{sec1.5}, we provide a variational formulation for the Lamb dipole $\omega_L$. In Section \ref{sect2} we establish the existence of maximizers. The uniqueness of maximizers is proved in Section \ref{sect3}. Section \ref{sect4} is devoted to establishing the orbital stability in Theorem \ref{thm1.4}.

	\section{Variational formulation}\label{sec1.5}
	We shall use Arnol'd's idea \cite{arnol2013mathematical} (see also \cite{abe2022stability, cao_existence_2022, choi2020stability}) to establish the nonlinear stability. The key idea is to give a variational characterization of the Lamb dipole $\omega _{L}$. Since the desired flows are odd symmetric about the $x_{1}$-axis, we can restrict our attention henceforth to the upper half-plane $ \Pi $.  Let $ \bar{x}=(x_{1}, -x_{2}) $ be the reflection of $ x $ in	the $ x_{1} $-axis. Denote
	\begin{equation}\label{1.9}
		G_{\Pi}\left( x,y \right) =G\left( x,y \right) -G\left( \bar{x},y \right), \quad  \forall \,x,y\in \Pi ,
	\end{equation}
	where $G(x, y)=G(|x-y|)$ is the fundamental solution of the Bessel operator $-\Delta+Id$. Define
	\begin{equation}\label{1.10}
		\mathcal{G} \omega \left( x \right) =\int_{\Pi}{G_{\Pi}\left( x,y \right) \omega \left( y \right) \mathrm{d}y, \quad x\in \Pi .}
	\end{equation}
	We introduced the kinetic energy of the fluid as follows
	\begin{equation*}
		E\left( \omega \right) =\frac{1}{2}\int_{\Pi}{\omega \left( x \right) \mathcal{G} \omega \left( x \right) \mathrm{d} x,}
	\end{equation*}
	and its impulse
	\begin{equation*}
		I\left( \omega \right) =\int_{\mathbb{R} ^2}{x_2\omega \left( x \right) \mathrm{d}x.}
	\end{equation*}
	Let $\lambda>1$, $\mu>0$ and $\nu>0$. We introduce the following space of admissible functions
	\begin{equation*}
		\mathcal{A} _{\mu ,\nu}:=\left\{ \omega \in L^2\left( \Pi \right) \mid \omega \ge 0, \int_{\Pi}{x_2\omega(x)\, \mathrm{d}x = \mu , \int_{\Pi}{\omega}(x)\,\mathrm{d}x\le \nu \,\,} \right\},
	\end{equation*}
	and the energy functional  $ \mathcal{E} _{\la}$ corresponding to the flows
	\begin{equation*}
		\mathcal{E} _{\lambda}\left( \omega \right) =E\left( \omega \right) -\frac{1}{2\lambda}\int_{\Pi}{\omega ^2\mathrm{d} x,  \,\, \, \omega \in \mathcal{A} _{\mu ,\nu}.}
	\end{equation*}
	We will consider the maximization of the energy functional $ \mathcal{E}_{\la} $ relative to $ \mathcal{A} _{\mu ,\nu} $.  Set
	\begin{equation}\label{1.11}
		S_{\mu ,\nu ,\lambda}:=\mathop {\mathrm{sup}} \limits_{\omega \in \mathcal{A} _{\mu ,\nu} }\mathcal{E} _{\lambda}\left( \omega \right),
	\end{equation}
	and
	\begin{equation}\label{1.12}
		\Sigma_{\mu, \nu, \lambda} :=\left\{ \omega \in \mathcal{A} _{\mu ,\nu} \mid \mathcal{E} _{\lambda}\left( \omega \right) =S_{\mu ,\nu ,\lambda} \right\}.
	\end{equation}
	Recall that $$\omega_L=\omega_L^{\lambda, W}=\lambda \left( \Psi_L^{\lambda, W} -Wx_2 \right) _+,$$ where $\Psi_L^{\lambda, W}$ is given by \eqref{1.8}. We will show that the Lamb dipole $\omega _{L}^{\lambda, W}$ can be re-obtained via the maximization problem \eqref{1.11} by appropriately choosing the parameters. More precisely, we have (see also Corollary \ref{add-cor} below)
	
	\begin{proposition}\label{thm1.2}
		Let $\lambda>1$ and $\mu>0$ be given. Then there exists $\nu_0>0$, such that if $\nu\ge \nu_0$, then
		\begin{equation*}
			\Sigma_{\mu, \nu, \lambda}=\left\{\omega_{L}^{\lambda, W}\left(\cdot+c \mathrm{e}_{1}\right) \mid c \in \mathbb{R} \right\},
		\end{equation*}
		where $W=\mu/I(\omega_L^{\lambda, 1})$.
	\end{proposition}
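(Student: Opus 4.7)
The plan is to combine the existence of a maximizer for \eqref{1.11} (to be established in Section \ref{sect2}) with the uniqueness theorem for \eqref{1.7} (Section \ref{sect3}) via the Euler--Lagrange equation of the constrained problem. First I note that both \eqref{1.7} and the compatibility condition \eqref{1.99} are linear in $W$, so $\Psi_{L}^{\lambda, W}=W\,\Psi_{L}^{\lambda, 1}$, and consequently $\omega_L^{\lambda, W}=W\,\omega_L^{\lambda, 1}$ and $I(\omega_L^{\lambda, W})=W\,I(\omega_L^{\lambda, 1})$. Setting $W:=\mu/I(\omega_L^{\lambda,1})$ and choosing $\nu_0\ge\|\omega_L^{\lambda, W}\|_{1}$ (finite because $\omega_L^{\lambda,W}$ is compactly supported in $\Pi$), each horizontal translate $\omega_L^{\lambda, W}(\cdot+c\mathrm{e}_1)$ with $c\in\mathbb{R}$ lies in $\mathcal{A}_{\mu,\nu}$ whenever $\nu\ge\nu_0$. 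I will also use, throughout, that $\mathcal{E}_{\lambda}$, the impulse $I$, and all $L^s$-norms are invariant under horizontal translation.

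\textbf{Euler--Lagrange analysis.} Let $\omega\in\Sigma_{\mu,\nu,\lambda}$ be a maximizer. Testing against admissible perturbations that preserve non-negativity, the impulse constraint, and the inequality $\|\omega\|_1\le\nu$, and using that the first variation of $\mathcal{E}_{\lambda}$ at $\omega$ is $\int_{\Pi}(\mathcal{G}\omega-\lambda^{-1}\omega)\phi\,\mathrm{d}x$, a standard bathtub-type argument produces Lagrange multipliers $W'\in\mathbb{R}$ (from the equality constraint) and $\gamma\ge 0$ (from the inequality constraint, with complementary slackness $\gamma(\nu-\|\omega\|_1)=0$) such that
\begin{equation*}
\omega(x)=\lambda\bigl(\mathcal{G}\omega(x)-W'x_2-\gamma\bigr)_+\quad\text{a.e.\ in }\Pi.
\end{equation*}
Enlarging $\nu_0$ if necessary so that it dominates a uniform a priori $L^1$-bound on maximizers (obtained by comparing $\mathcal{E}_{\lambda}(\omega)$ with $\mathcal{E}_{\lambda}(\omega_L^{\lambda,W})$, the negative coefficient $-1/(2\lambda)$ of the $L^2$-term preventing mass from being freely added), complementary slackness forces $\gamma=0$ for every $\nu\ge\nu_0$. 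Moreover $W'>0$: since $\mu>0$ the support of $\omega$ is non-empty, and the decay of $\mathcal{G}\omega$ at infinity together with the positivity of $\omega$ on some set would be incompatible with $W'\le 0$.

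\textbf{Identification via uniqueness.} With $\gamma=0$ the function $\Psi:=\mathcal{G}\omega$ satisfies $-\Delta\Psi+\Psi=\omega=\lambda(\Psi-W'x_2)_+$ in $\Pi$, vanishes on $\partial\Pi$, is odd in $x_2$, and decays at infinity; since $\mathcal{G}\omega\to 0$ while $W'x_2\to\infty$, the support of $\omega$ is bounded. Thus $\Psi$ is a solution of \eqref{1.7} with parameter $W'$, and the uniqueness result of Section \ref{sect3} gives $\omega=\omega_L^{\lambda,W'}(\cdot+c\mathrm{e}_1)$ for some $c\in\mathbb{R}$. The impulse constraint $\mu=I(\omega)=W'\cdot I(\omega_L^{\lambda,1})$ then pins $W'=W$. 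The reverse inclusion follows because Section \ref{sect2} produces at least one maximizer, which by the preceding argument must be a translate of $\omega_L^{\lambda,W}$; translation invariance of $\mathcal{E}_{\lambda}$, $I$, and $L^s$-norms then promotes every translate to a maximizer, yielding $\Sigma_{\mu,\nu,\lambda}=\{\omega_L^{\lambda,W}(\cdot+c\mathrm{e}_1):c\in\mathbb{R}\}$.

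\textbf{Main obstacle.} The most delicate point is the $\nu$-independent a priori $L^1$-bound on maximizers, which is needed to deactivate the inequality constraint and discard the parasitic Lagrange multiplier $\gamma$; without this bound the variational equation contains the extra constant $\gamma$ and cannot be matched against \eqref{1.7}. A subsidiary technical issue is verifying the sign $W'>0$ and the boundedness of $\supp\omega$ directly from the structure of the equation, both of which must be in place before uniqueness of \eqref{1.7} can be invoked.
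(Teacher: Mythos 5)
Your overall architecture coincides with the paper's: existence of a maximizer (Section \ref{sect2}), the Euler--Lagrange relation $\omega=\lambda(\mathcal{G}\omega-W'x_2-\gamma)_+$, elimination of $\gamma$ for large $\nu$, positivity of $W'$, compact support of $\omega$, identification through uniqueness for \eqref{1.7}, and normalization of $W'$ by the impulse. However, two of your steps have genuine gaps at exactly the points where the paper has to work hardest. First, your route to $\gamma=0$ rests on a claimed $\nu$-independent a priori $L^1$ bound on maximizers ``obtained by comparing $\mathcal{E}_{\lambda}(\omega)$ with $\mathcal{E}_{\lambda}(\omega_L^{\lambda,W})$''. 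The penalty term $-\tfrac{1}{2\lambda}\|\omega\|_2^2$ controls the $L^2$ norm, not the $L^1$ norm: a diffuse, low-amplitude halo concentrated near $\partial\Pi$ can carry mass of order $\nu$ while contributing $o(1)$ to the impulse, to $\|\omega\|_2^2$ and to $E(\omega)$, so the energy comparison does not exclude saturation of the mass constraint. The paper's Lemma \ref{lm3.2} instead proves the strict inequality $\int_\Pi\omega\,\mathrm{d}x<\nu$ for large $\nu$ by combining the Chebyshev bound $\int_{x_2\ge 2\mu/\nu}\omega\,\mathrm{d}x\le\nu/2$ with the already-derived inequality $\omega\le\lambda\mathcal{G}\omega$ and the uniform smallness of $\int_{0<y_2<2\mu/\nu}G_{\Pi}(x,y)\,\mathrm{d}y$; only then is the variation re-run without the mass test function $h_1$ to conclude $\gamma=0$. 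You would need to supply an argument of this type, since complementary slackness alone does not deliver it.

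Second, your assertion that $\supp(\omega)$ is bounded ``since $\mathcal{G}\omega\to0$ while $W'x_2\to\infty$'' fails along the boundary: as $|x_1|\to\infty$ with $x_2$ small, $W'x_2$ does not tend to infinity, and both $\mathcal{G}\omega$ and $W'x_2$ vanish there, so the set $\{\mathcal{G}\omega>W'x_2\}$ could a priori be unbounded in the $x_1$-direction. The paper's Lemma \ref{lm3.3} closes this by proving the stronger statement $\mathcal{G}\omega/x_2\to0$ as $|x|\to\infty$, which requires elliptic regularity up to the boundary and Hardy's inequality. Relatedly, your one-line dismissal of $W'\le0$ handles $W'<0$ (growth of $-W'x_2$ contradicts integrability) but not the borderline case $W'=0$; the paper's computation $0<2S_{\mu,\nu,\lambda}\le W\mu$ covers both at once. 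Finally, before invoking the uniqueness result you must also verify the regularity hypothesis ($\mathcal{G}\omega\in BUC^{2,\alpha}$) needed for the moving-plane argument of Lemma \ref{lm3.4}. With these repairs your proof would coincide with the paper's.
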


	\section{Existence of Maximizers}\label{sect2}
	In this section, we prove the existence of maximizers for $\mathcal{E} _{\lambda}$ over $\mathcal{A} _{\mu ,\nu}$. We first give some basic estimates that will be used frequently later. In what follows, the symbol $C$ denotes a general positive constant that may change from line to line.
	We have the following basic estimate:
	\begin{equation}\label{ess}
		G(x,y) =\begin{cases}
			C_0\left( \ln \frac{2}{|x-y|}+O(1)  \right),\quad&\mathrm{if} \ \   |x-y|\le 2,\\
			O\Big(e^{-|x-y|/2}\Big),  &\mathrm{if} \ \  |x-y|>2,\\
		\end{cases}
	\end{equation}
	where $C_0$ is a positive number.
	
	\begin{lemma} \label{lm2.1}
		There exists a positive constant $ C $ such that if $ 0\le\ \omega \in L^{1}(\Pi)\cap L^{2}(\Pi)$,  then
		\begin{equation}\label{2.1}
			\|\mathcal{G}\omega\|_\infty \le C\|\omega \|_{1}^{1/2}\|\omega \|_{2}^{1/2},
		\end{equation}
		and
		\begin{equation}\label{2.3}
			E(\omega) \le C\|\omega \|_{1}^{3/2}\|\omega \|_{2}^{1/2}.
		\end{equation}
	\end{lemma}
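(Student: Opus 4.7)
The plan is to prove \eqref{2.1} first and then deduce \eqref{2.3} immediately from it by H\"older's inequality.

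For \eqref{2.1}, I would begin by observing that for $x, y \in \Pi$ one has $|x-y| \le |\bar{x}-y|$, and the Bessel kernel $G(r)$ is radially decreasing. Together with the explicit bound \eqref{ess} this yields
$$0 \le G_\Pi(x,y) \le G(x,y), \qquad x,y \in \Pi,$$
so extending $\omega$ by zero to $\mathbb{R}^2$ (still calling it $\omega$) gives the pointwise domination
$$|\mathcal{G}\omega(x)| \le (G * \omega)(x), \qquad x \in \Pi.$$

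Next, from \eqref{ess} the kernel $G$ has only a logarithmic singularity near $0$ and exponential decay at infinity; in particular $G \in L^p(\mathbb{R}^2)$ for every $1 \le p < \infty$, and the relevant exponent for us is $p = 4$. Applying Young's convolution inequality with exponents $(4/3, 4, \infty)$ (since $1 + 1/\infty = 3/4 + 1/4$), I obtain
$$\|\mathcal{G}\omega\|_\infty \le \|G*\omega\|_\infty \le \|G\|_4 \, \|\omega\|_{4/3} \le C\|\omega\|_{4/3}.$$
Finally, the log-convexity of $L^p$-norms at $p=4/3$ between $L^1$ and $L^2$ gives
$$\|\omega\|_{4/3} \le \|\omega\|_1^{1/2}\|\omega\|_2^{1/2},$$
which combined with the previous estimate yields \eqref{2.1}.

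The bound \eqref{2.3} is then almost immediate: by the definition of $E$ and H\"older,
$$E(\omega) = \tfrac{1}{2}\int_\Pi \omega(x)\,\mathcal{G}\omega(x)\,\mathrm{d}x \le \tfrac{1}{2}\|\omega\|_1 \|\mathcal{G}\omega\|_\infty \le C\|\omega\|_1^{3/2}\|\omega\|_2^{1/2}.$$

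There is no substantial obstacle here; the argument is entirely standard once one has the kernel estimate \eqref{ess}. The only subtlety worth noting is the use of the reflection symmetry to pass from the Green's function $G_\Pi$ of the half-plane to the full-space Bessel kernel $G$, and the choice of the exponent $4/3$ (equivalently, $G \in L^4$) so that Young's inequality produces precisely the geometric mean $\|\omega\|_1^{1/2}\|\omega\|_2^{1/2}$ required. A more elementary splitting of the integral into near and far parts also works but introduces unwanted logarithmic factors, which is why Young's inequality is the cleaner route.
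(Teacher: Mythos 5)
Your proposal is correct and follows essentially the same route as the paper: the paper also bounds $\int_\Pi G_\Pi(x,y)\omega(y)\,\mathrm{d}y \le C\|\omega\|_{4/3}$ (via H\"older with the kernel in $L^4$, which is what your Young's-inequality step amounts to at the $L^\infty$ endpoint), interpolates $\|\omega\|_{4/3}\le\|\omega\|_1^{1/2}\|\omega\|_2^{1/2}$, and then deduces \eqref{2.3} exactly as you do. Your explicit justification of $0\le G_\Pi\le G$ via the reflection is a detail the paper leaves implicit.
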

	\begin{proof}
		Let us first prove \eqref{2.1}. By Hölder's inequality, we have
		\[
		\int_{\Pi}G_{\Pi}(x,y)\omega(y) \mathrm{d} y
		\le  C \|\omega \|_{4/3}\le C\|\omega \|_{1}^{1/2}\|\omega \|_{2}^{1/2},\quad\forall\, x\in \Pi.
		\]
		By the definition of $ E$ and  \eqref{2.1}, we get
		\[
		E(\omega )\le C \|\mathcal{G}\omega\|_\infty \|\omega \|_{1}\le C\|\omega \|_{1}^{3/2}\|\omega \|_{2}^{1/2}.
		\]
		The proof is thus complete.
	\end{proof}

	\begin{lemma}\label{lm2.22}
		Suppose that  $ 0\le \omega \in L^{1}(\Pi)\cap L^{2}(\Pi)$, we have
		\begin{equation}\label{2.2}
			\mathcal{G}\omega(x)\rightarrow0 \quad \text{as}\ |x|\rightarrow\infty.
		\end{equation}
	\end{lemma}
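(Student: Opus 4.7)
The plan is to split the integral defining $\mathcal{G}\omega(x)$ into a piece where $\omega$ is small (handled uniformly in $x$) and a piece on a fixed bounded region (where the exponential decay of $G$ kicks in once $|x|$ is large). First I would observe that $G(\bar{x},y)\le G(x,y)$ for $x,y\in\Pi$, because $|\bar x-y|\ge|x-y|$ and the radial profile of $G$ (proportional to $K_0(|x-y|)$) is monotone decreasing; hence $0\le G_\Pi(x,y)\le G(x,y)$ and in particular $|\mathcal{G}\omega(x)|\le \int_\Pi G(x,y)\omega(y)\,\mathrm dy$.

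Fix $\varepsilon>0$. Since $\omega\in L^1(\Pi)\cap L^2(\Pi)$, the truncations $\omega_M:=\omega\chi_{\Pi\setminus B(0,M)}$ satisfy $\|\omega_M\|_1\to 0$ and $\|\omega_M\|_2\le\|\omega\|_2$. Applying Lemma \ref{lm2.1} to $\omega_M$ gives
\begin{equation*}
\sup_{x\in\Pi}\int_{\Pi\setminus B(0,M)}G_\Pi(x,y)\omega(y)\,\mathrm dy\;=\;\|\mathcal{G}\omega_M\|_\infty\;\le\; C\|\omega_M\|_1^{1/2}\|\omega_M\|_2^{1/2},
\end{equation*}
and by choosing $M=M(\varepsilon)$ large enough this is bounded by $\varepsilon/2$ uniformly in $x$.

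With $M$ now fixed, I would bound the remaining tail using the exponential estimate in \eqref{ess}. For $|x|>M+2$ and $y\in B(0,M)\cap\Pi$ one has $|x-y|\ge|x|-M>2$, so $G(x,y)\le C\,e^{-(|x|-M)/2}$. Therefore
\begin{equation*}
\int_{B(0,M)\cap\Pi}G_\Pi(x,y)\omega(y)\,\mathrm dy\;\le\; C\,e^{-(|x|-M)/2}\,\|\omega\|_1,
\end{equation*}
which is smaller than $\varepsilon/2$ once $|x|$ exceeds some threshold $R_0=R_0(\varepsilon,M,\|\omega\|_1)$. Adding the two estimates yields $|\mathcal{G}\omega(x)|\le\varepsilon$ for all $|x|\ge R_0$, proving \eqref{2.2}.

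There is no real obstacle here; the only thing to be slightly careful about is that the logarithmic singularity in \eqref{ess} is never directly seen, because on the outer piece it is absorbed via the $L^{4/3}$ estimate already used in Lemma \ref{lm2.1}, while on the inner piece $B(0,M)$ the point $x$ is far from the domain of integration and only the exponential tail of $G$ plays a role.
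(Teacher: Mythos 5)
Your proof is correct and follows essentially the same strategy as the paper's: decompose the integration domain into a region near the origin (where the exponential tail of $G$ from \eqref{ess} applies because $x$ is far away) and its complement (where Lemma \ref{lm2.1} applied to the truncated $\omega$ gives a uniformly small contribution since $\|\omega\chi_{\Pi\setminus B(0,M)}\|_1\to 0$). The only cosmetic difference is that the paper uses the moving cutoff $|y|\le |x|/2$ and phrases the conclusion as $o(1)$, whereas you fix the cutoff $M=M(\varepsilon)$ first; the substance is identical.
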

	\begin{proof}
		
		For $|x|$ large, by \eqref{ess} and \eqref{2.1} we have
		\[
		\begin{split}0\le\mathcal{G}\omega(x) & \le\int_{|y|\le|x|/2}G_{\Pi}(x,y)\omega(y)\mathrm{d} y+\int_{|y|\ge|x|/2}G_{\Pi}(x,y)\omega(y)\mathrm{d} y\\
			& \le C\Big(e^{-|x|/4}\|\omega\|_{1}+\|\omega1_{\Pi\setminus B_{|x|/2}(0)}\|_{1}+\|\omega1_{\Pi\setminus B_{|x|/2}(0)}\|_{2}\Big)\\
			& =o(1),
		\end{split}
		\]
		which implies \eqref{2.2} and completes the proof.
	\end{proof}
	
	Since the energy $\mathcal{E}_{\lambda}$ is invariant under translations in the $x_{1}$-direction, to control maximizers, we shall  take the Steiner symmetrization in the $x_{1}$-variable.
	
	We have the following result, whose proof is quite similar to that in \cite{fraenkel1974global,turkington1983steady1, turkington1983steady2} and so is omitted.
	
	\begin{lemma} \label{lm2.2}
		For $ \omega \ge 0 $ satisfying $ \omega \in L^{1}\cap L^{2}(\Pi) $ and $ x_{2}\omega \in L^{1}(\Pi) $,  there exists $ \omega ^{*} \ge 0$ such that
		\begin{equation}
			\begin{split} & \omega^{*}(x_{1},x_{2})=\omega^{*}(-x_{1},x_{2}),\\
				& \omega^{*}(x_{1},x_{2})\ \text{is non-increasing for}\ x_{1}>0
			\end{split}
		\end{equation}
		and
		\begin{align*}
			& \|\omega^{*}\|_{s}=\|\omega\|_{s},\ \ \forall\, s\in[1,2],\\
			& \|x_{2}\omega^{*}\|_{1}=\|x_{2}\omega^{*}\|_{1},\\
			& E(\omega^{*})\ge E(\omega).
		\end{align*}
	\end{lemma}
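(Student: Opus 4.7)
The plan is classical Steiner symmetrization of $\omega$ in the $x_1$-variable, slice by slice in $x_2$: for each fixed $x_2>0$, define $\omega^*(\cdot,x_2)$ as the one-dimensional symmetric-decreasing rearrangement of $x_1\mapsto\omega(x_1,x_2)$ on $\mathbb{R}$. Non-negativity, the evenness $\omega^*(x_1,x_2)=\omega^*(-x_1,x_2)$, and monotonicity in $x_1>0$ are built into this definition. Since the slice-wise distribution functions are preserved, one has $\|\omega^*(\cdot,x_2)\|_{L^s(\mathbb{R})}=\|\omega(\cdot,x_2)\|_{L^s(\mathbb{R})}$ for every $s\in[1,\infty]$; Fubini then yields $\|\omega^*\|_s=\|\omega\|_s$ on $\Pi$ for $s\in[1,2]$. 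The same slice identity in the case $s=1$, multiplied by $x_2$ and integrated in $x_2>0$, gives $\|x_2\omega^*\|_1=\|x_2\omega\|_1$ (which is presumably the intended content of the fourth bullet; the printed formula appears to contain a typo).

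The only nontrivial point is the energy inequality $E(\omega^*)\ge E(\omega)$. The key observation is that the half-plane kernel $G_{\Pi}(x,y)=G(|x-y|)-G(|\bar{x}-y|)$, viewed for fixed $x_2,y_2>0$ as a function of $t:=x_1-y_1$, takes the form
\[
\phi_{x_2,y_2}(t)=G\!\left(\sqrt{t^2+(x_2-y_2)^2}\right)-G\!\left(\sqrt{t^2+(x_2+y_2)^2}\right),
\]
and is non-negative, even in $t$, and non-increasing in $|t|$. Granting this, each slice double-integral $\int_{\mathbb{R}}\int_{\mathbb{R}}\phi_{x_2,y_2}(x_1-y_1)\,\omega(x_1,x_2)\,\omega(y_1,y_2)\,dx_1\,dy_1$ is a one-dimensional Riesz convolution with a symmetric-decreasing kernel, so the one-dimensional Riesz rearrangement inequality gives slice-wise monotonicity under replacement of $\omega(\cdot,x_2),\omega(\cdot,y_2)$ by their rearrangements. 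Integrating over $(x_2,y_2)\in(0,\infty)^2$ by Fubini then delivers $E(\omega^*)\ge E(\omega)$.

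The main obstacle is therefore the monotonicity of $\phi_{x_2,y_2}$ in $|t|$. Since $G(r)=\frac{1}{2\pi}K_0(r)$ is the fundamental solution of $-\Delta+\mathrm{Id}$ in two dimensions and $G'(r)=-\frac{1}{2\pi}K_1(r)$, differentiation reduces the claim to showing that $r\mapsto K_1(r)/r$ is non-increasing on $(0,\infty)$, which is a standard property of modified Bessel functions (verifiable from $K_1'(r)=-K_0(r)-K_1(r)/r$). Once this is in hand, the proof proceeds exactly along the lines of Fraenkel--Berger and Turkington, and no further computation is required.
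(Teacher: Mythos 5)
Your proof is correct and follows exactly the route the paper intends: the paper omits the proof of this lemma, deferring to Fraenkel--Berger and Turkington, and your slice-wise Steiner symmetrization plus the one-dimensional Riesz rearrangement inequality is precisely that argument adapted to the Bessel kernel. You also correctly isolate and verify the only genuinely new ingredient here, namely that $t\mapsto G_{\Pi}$ restricted to fixed $x_2,y_2$ is symmetric decreasing because $r\mapsto K_1(r)/r$ is non-increasing (indeed $\frac{d}{dr}\bigl(K_1(r)/r\bigr)=-K_2(r)/r<0$), and you are right that the fourth displayed identity in the statement is a typo for $\|x_2\omega^*\|_1=\|x_2\omega\|_1$.
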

	
	For a Steiner symmetric function, we have the following estimate:
	\begin{lemma} \label{lm2.3}
		There exists a positive constant $ C $ such that if $ 0\le \omega \in L^{1}(\Pi)\cap L^{2}(\Pi)$ is Steiner symmetric	in	the $ x_{1} $-variable, then
		\begin{equation}\label{2.5}
			\mathcal{G}\omega(x)\le C\Big(|x_{1}|^{-3/8}\|\omega \|_{1 }^{1/2}\|\omega \|_{2 }^{1/2}+e^{-\frac{\sqrt{|x_1|}}{2}}\|\omega\|_{1}\Big)
		\end{equation}
		for any $x=(x_1,x_2)\in \Pi$ with $|x_1|>4$.
	\end{lemma}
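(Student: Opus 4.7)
The plan is to split the integral defining $\mathcal{G}\omega(x)$ into a near piece and a far piece around $x$, with cutoff $R = \sqrt{|x_1|}$. Since $G_\Pi(x,y) \leq G(x,y)$ (as $G\geq 0$), it suffices to bound $\int_\Pi G(x,y)\omega(y)\,\mathrm{d}y = I_1 + I_2$, where $I_1$ integrates over $\{|y-x| > R\}$ and $I_2$ over $\{|y-x| \leq R\}$. The hypothesis $|x_1| > 4$ gives $R \leq |x_1|/2$, so every $y$ in the near region automatically satisfies $|y_1| \geq |x_1|/2$. The far piece $I_1$ is immediate from the exponential tail in \eqref{ess}: on $\{|y-x|>R\}$ we have $G(x,y) \leq Ce^{-R/2}$, so $I_1 \leq Ce^{-\sqrt{|x_1|}/2}\|\omega\|_1$, matching the second term of \eqref{2.5}.

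The heart of the matter is $I_2$, which I would handle by using Steiner symmetry to reduce $\omega$ to an essentially one-dimensional object. For each fixed $y_2$, the function $y_1 \mapsto \omega(y_1, y_2)$ is even and non-increasing on $[0,\infty)$, so for $|y_1| \geq |x_1|/2$ one has the pointwise domination $\omega(y_1,y_2) \leq f(y_2) := \omega(|x_1|/2,\, y_2)$. The standard tail inequalities for non-increasing nonnegative functions then yield $f(y_2) \leq |x_1|^{-1}\|\omega(\cdot,y_2)\|_{L^1_{y_1}}$ and $f(y_2)^2 \leq |x_1|^{-1}\|\omega(\cdot,y_2)\|_{L^2_{y_1}}^2$. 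Integrating in $y_2$ gives $\|f\|_{L^1_{y_2}} \leq |x_1|^{-1}\|\omega\|_1$ and $\|f\|_{L^2_{y_2}} \leq |x_1|^{-1/2}\|\omega\|_2$, and by interpolation at the midpoint $\theta=1/2$ (i.e., $L^{4/3}$ between $L^1$ and $L^2$),
\[
\|f\|_{L^{4/3}_{y_2}} \leq \|f\|_{L^1_{y_2}}^{1/2}\|f\|_{L^2_{y_2}}^{1/2} \leq |x_1|^{-3/4}\|\omega\|_1^{1/2}\|\omega\|_2^{1/2}.
\]

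Next I would apply Hölder's inequality on the disc $B_R(x)$ with exponents $(4, 4/3)$:
\[
I_2 \leq \int_{B_R(x)\cap\Pi} G(x,y)f(y_2)\,\mathrm{d}y \leq \|G\|_{L^4(B_R(x))}\|f\|_{L^{4/3}(B_R(x))}.
\]
The first factor is $O(1)$ uniformly in $R$: by \eqref{ess}, the logarithmic singularity is $L^4$-integrable in two dimensions and the exponential tail of $G$ integrates harmlessly. Because $f$ depends only on $y_2$ and each $y_1$-slice of $B_R(x)$ has length at most $2R$, we obtain $\|f\|_{L^{4/3}(B_R(x))}^{4/3} \leq 2R\,\|f\|_{L^{4/3}_{y_2}}^{4/3}$. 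Assembling everything,
\[
I_2 \leq C\bigl(2R/|x_1|\bigr)^{3/4}\|\omega\|_1^{1/2}\|\omega\|_2^{1/2} = C|x_1|^{-3/8}\|\omega\|_1^{1/2}\|\omega\|_2^{1/2}
\]
for the choice $R = \sqrt{|x_1|}$, and adding to $I_1$ yields \eqref{2.5}.

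The main subtlety is to simultaneously balance all parameters: the choice $R = \sqrt{|x_1|}$ is forced by wanting the far-field rate $e^{-R/2}$ to beat any polynomial factor (yielding the sub-exponential $e^{-\sqrt{|x_1|}/2}$), while the Hölder pair $(4, 4/3)$ is dictated by the requirement that the interpolation of $\|f\|_p$ land exactly at the midpoint $\theta = 1/2$, producing the symmetric factor $\|\omega\|_1^{1/2}\|\omega\|_2^{1/2}$. The Steiner-derived gain $|x_1|^{-3/4}$ then combines with the volume factor $R^{3/4}$ to give precisely the claimed $|x_1|^{-3/8}$.
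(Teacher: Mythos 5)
Your argument is correct and follows essentially the same route as the paper: a near/far decomposition at the scale $\sqrt{|x_1|}$, the exponential tail of $G$ in \eqref{ess} for the far piece, and a Steiner-symmetry gain paired with H\"older against $\|G(x,\cdot)\|_{L^4}\le C$ for the near piece. The only real difference is that where the paper cites Burton's rearrangement inequality (Eq.\ (2.11) of \cite{burton_steady_1988}) to get $\|\omega_1\|_p\le |x_1|^{-1/(2p)}\|\omega\|_p$ on the strip and then invokes Lemma \ref{lm2.1}, you re-derive the equivalent smallness from scratch via pointwise domination by the one-variable profile $f(y_2)=\omega(|x_1|/2,y_2)$ and interpolation, which is a self-contained but equivalent computation.
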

	\begin{proof}
		Let $x=(x_1,x_2)\in \Pi$ satisfy $|x_1|>4$. Define
		\begin{equation*}
			\omega_{1}(y)=\begin{cases}
				\omega(y),\quad&\text{ if }\left|y_{1}-x_{1}\right|<\sqrt{\left|x_{1}\right|},\\
				0, & \text{ if }\left|y_{1}-x_{1}\right|\geq\sqrt{\left|x_{1}\right|}.
			\end{cases}
		\end{equation*}
		Using Eq. (2.11) in \cite{burton_steady_1988}, we have
		\begin{equation*}
			\|\omega_{1}\|_{p}\le |x_{1}|^{-\frac{1}{2p}}\|\omega\|_{p},\quad 1\le p \le \infty.
		\end{equation*}
		Hence, by \eqref{2.1}, we have
		\begin{equation}\label{2.6}
			\begin{split}
				\mathcal{G}\omega_{1}(x)&\le C\|\omega _{1}\|_{1 }^{1/2}\|\omega _{1}\|_{2 }^{1/2}\\
				&\le C|x_{1}|^{-3/8}\|\omega \|_{1 }^{1/2}\|\omega \|_{2 }^{1/2}.\\
			\end{split}
		\end{equation}
		Letting $ \omega_{2} =\omega-\omega_{1}$, we have
		\begin{equation}\label{2.7}
			\mathcal{G}\omega_{2}(x)\le Ce^{-\frac{\sqrt{|x_1|}}{2}}\|\omega\|_{1}.
		\end{equation}
		Combining \eqref{2.6} and \eqref{2.7},  we get  \eqref{2.5}.
	\end{proof}
	
	Set
	\begin{equation*}
		\varrho(\lambda)=\frac{1}{I(\omega_L^{\lambda, 1})}\int_{\Pi} \omega_L^{\lambda, 1}(x)\,\mathrm{d} x.
	\end{equation*}
	Note that $\omega_L^{\lambda, W}=W\omega_L^{\lambda, 1}$. We have the following result concerning the supremum value.
	\begin{lemma}\label{lm2.4}
		If $\mu \varrho(\lambda)\le \nu$, then
		\begin{equation}\label{2.8}
			0< S_{\mu ,\nu ,\lambda}\le C.
		\end{equation}
	\end{lemma}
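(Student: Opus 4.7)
The bound consists of two separate assertions: positivity of $S_{\mu,\nu,\lambda}$ and its finiteness. I would treat them independently.

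For the upper bound, the plan is to combine the energy estimate $E(\omega)\le C\|\omega\|_1^{3/2}\|\omega\|_2^{1/2}$ from Lemma \ref{lm2.1} with the penalty term $-\frac{1}{2\lambda}\|\omega\|_2^2$ built into $\mathcal{E}_\lambda$. Applying Young's inequality with exponents $4$ and $4/3$ to split $\|\omega\|_1^{3/2}\|\omega\|_2^{1/2}$, one obtains
\begin{equation*}
 C\|\omega\|_1^{3/2}\|\omega\|_2^{1/2}\le \frac{1}{4\lambda}\|\omega\|_2^2+C_\lambda\|\omega\|_1^2,
\end{equation*}
so that
\begin{equation*}
 \mathcal{E}_\lambda(\omega)\le -\frac{1}{4\lambda}\|\omega\|_2^2+C_\lambda\|\omega\|_1^2\le C_\lambda\nu^2,
\end{equation*}
for any $\omega\in\mathcal{A}_{\mu,\nu}$. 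The right-hand side depends only on $\lambda$ and $\nu$, giving the finite upper bound in \eqref{2.8}.

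For the positivity, the natural candidate is the Lamb dipole itself: set $W=\mu/I(\omega_L^{\lambda,1})$ and $\omega_\star=\omega_L^{\lambda,W}=W\omega_L^{\lambda,1}$. Then $I(\omega_\star)=\mu$ by construction, and the definition of $\varrho(\lambda)$ gives $\|\omega_\star\|_1=W\|\omega_L^{\lambda,1}\|_1=\mu\varrho(\lambda)\le\nu$, so $\omega_\star\in\mathcal{A}_{\mu,\nu}$ under the stated hypothesis. Since $\Psi_L^{\lambda,W}=\mathcal{G}\omega_\star$ solves \eqref{1.7} with $\omega_\star=\lambda(\Psi_L^{\lambda,W}-Wx_2)_+$ and the integrand $\omega_\star(\Psi_L^{\lambda,W}-Wx_2)_+$ equals $\omega_\star(\Psi_L^{\lambda,W}-Wx_2)$ on $\{\omega_\star>0\}$, I can compute directly
\begin{equation*}
\mathcal{E}_\lambda(\omega_\star)=\frac{1}{2}\int_{\Pi}\omega_\star\Psi_L^{\lambda,W}\,\mathrm{d}x-\frac{1}{2}\int_{\Pi}\omega_\star(\Psi_L^{\lambda,W}-Wx_2)\,\mathrm{d}x=\frac{W}{2}\int_{\Pi}x_2\omega_\star\,\mathrm{d}x=\frac{W\mu}{2}>0.
\end{equation*}
Hence $S_{\mu,\nu,\lambda}\ge \mathcal{E}_\lambda(\omega_\star)>0$, completing the proof.

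No single step is a serious obstacle; the only place where care is needed is the algebraic manipulation in the lower bound, where one must exploit the pointwise identity $\omega_\star=\lambda(\Psi_L^{\lambda,W}-Wx_2)_+$ to turn $\|\omega_\star\|_2^2/\lambda$ into $\int\omega_\star(\Psi_L^{\lambda,W}-Wx_2)$ so that the cancellation with $2E(\omega_\star)=\int\omega_\star\Psi_L^{\lambda,W}$ leaves only the manifestly positive impulse term $W\mu/2$.
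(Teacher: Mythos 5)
Your proposal is correct and follows essentially the same route as the paper: the upper bound via the interpolation estimate \eqref{2.3} plus Young's inequality absorbing $\|\omega\|_2^{1/2}$ into the $-\frac{1}{2\lambda}\|\omega\|_2^2$ penalty, and the lower bound by testing with the Lamb dipole $\omega_L^{\lambda,W}$, $W=\mu/I(\omega_L^{\lambda,1})$, whose membership in $\mathcal{A}_{\mu,\nu}$ is exactly the hypothesis $\mu\varrho(\lambda)\le\nu$. Your explicit evaluation $\mathcal{E}_\lambda(\omega_\star)=W\mu/2$ is just the paper's final positive integrand computed in closed form, so there is nothing to add.
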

	\begin{proof}
		By \eqref{2.3} and Young's inequality, we have for $ \omega \in \mathcal{A}_{\mu, \nu}$
		\begin{equation*}
			\begin{split}
				\mathcal{E}_{\lambda}(\omega )&=E(\omega)-\frac{1}{2\la}\int_{\Pi}\omega^{2}\mathrm{d} x\\
				& \le C\|\omega \|_{1}^{3/2}\|\omega \|_{2}^{1/2}-\frac{1}{2\la}\int_{\Pi}\omega^{2}\mathrm{d} x\\
				&\le C\lambda^{1/3}\|\omega \|_{1}^2\le C.
			\end{split}
		\end{equation*}
		On the other hand, since $\omega_L^{\lambda, \tilde{W}}$ with $\tilde{W}=\mu/I(\omega_L^{\lambda, 1})$ belongs to $\mathcal{A}_{\mu,\nu}$, so
		\begin{align*}
			\mathcal{E} _{\lambda}\big( \omega_L^{\lambda, \tilde{W}} \big) &=\frac{1}{2}\int_{\Pi}{\lambda \big( \Psi_L^{\lambda, \tilde{W}}-\tilde{W}x_2 \big) _+\Psi_L^{\lambda, \tilde{W}}\mathrm{d} x-\frac{1}{2\lambda}\int_{\Pi}{\Big( \lambda \big( \Psi_L^{\lambda, \tilde{W}}-\tilde{W}x_2 \big) _+ \Big)^{2}}}\mathrm{d} x
			\\&
			=\frac{1}{2}\int_{\Pi}{\lambda}\big( \Psi_L^{\lambda, \tilde{W}}-\tilde{W}x_2 \big) _+\Big( \Psi_L^{\lambda, \tilde{W}}-\big[ \Psi_L^{\lambda, \tilde{W}}-\tilde{W}x_2 \big] _+ \Big) \mathrm{d} x
			>0.
		\end{align*}
		Therefore $S_{\mu ,\nu ,\lambda}\ge \mathcal{E} _{\lambda}\big( \omega_L^{\lambda, \tilde{W}} \big)>0$ and the proof is thus complete.
	\end{proof}
	In the sequel we shall assume that $\mu \varrho(\lambda)\le \nu$. Having made all the preparation, we are now able to show the existence of maximizers.
	\begin{lemma}\label{lm2.5}
		It holds $\Sigma _{\mu ,\nu ,\lambda} \not =\emptyset$. In addition, each $\omega\in \Sigma _{\mu ,\nu ,\lambda}$ satisfies $\displaystyle \int_{\Pi} x_{2}\omega(x) \mathrm{d} x=\mu$.
	\end{lemma}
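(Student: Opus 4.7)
The plan is the direct method of the calculus of variations. Take a maximizing sequence $\{\omega_n\} \subset \mathcal{A}_{\mu,\nu}$ with $\mathcal{E}_\lambda(\omega_n) \to S_{\mu,\nu,\lambda}$, which is strictly positive by Lemma~\ref{lm2.4}. Apply Lemma~\ref{lm2.2} to replace each $\omega_n$ by its Steiner symmetrization in $x_1$: this operation preserves admissibility (both $\|\omega_n\|_1 \le \nu$ and $I(\omega_n) = \mu$ remain in force) and does not decrease $\mathcal{E}_\lambda$, so without loss of generality $\omega_n$ is Steiner symmetric in $x_1$. From the identity $\frac{1}{2\lambda}\|\omega_n\|_2^2 = E(\omega_n) - \mathcal{E}_\lambda(\omega_n)$, Lemma~\ref{lm2.1}, and $\|\omega_n\|_1 \le \nu$, one deduces a uniform bound $\|\omega_n\|_2 \le C$. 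Pass to a subsequence with $\omega_n \rightharpoonup \omega^*$ weakly in $L^2(\Pi)$; then $\omega^* \ge 0$ a.e., $\|\omega^*\|_1 \le \nu$, and $I(\omega^*) \le \mu$ by Fatou.

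The key step is to show $E(\omega_n) \to E(\omega^*)$ and $I(\omega_n) \to I(\omega^*)$, which together with weak lower semicontinuity of $\omega \mapsto \|\omega\|_2^2$ will yield $\mathcal{E}_\lambda(\omega^*) \ge S_{\mu,\nu,\lambda}$ and the saturation $I(\omega^*) = \mu$. Two uniform tightness estimates drive both convergences. First, Chebyshev applied to $I(\omega_n) = \mu$ gives the vertical tightness $\int_{\{x_2 > R\}}\omega_n\,\mathrm{d}x \le \mu/R$. Second, Lemma~\ref{lm2.3} provides the horizontal decay
\begin{equation*}
\mathcal{G}\omega_n(x) \le C\bigl(|x_1|^{-3/8} + e^{-\sqrt{|x_1|}/2}\bigr) \quad \text{for } |x_1| > 4,
\end{equation*}
uniformly in $n$, whence $\int_{\{|x_1|>R\}}\omega_n\,\mathcal{G}\omega_n\,\mathrm{d}x \le CR^{-3/8}$. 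Since $G_\Pi(x,\cdot) \in L^2(\Pi)$ for each fixed $x \in \Pi$, weak $L^2$-convergence gives the pointwise convergence $\mathcal{G}\omega_n(x) \to \mathcal{G}\omega^*(x)$; Lemma~\ref{lm2.1} furnishes a uniform $L^\infty$ bound, and dominated convergence then yields $\mathcal{G}\omega_n \to \mathcal{G}\omega^*$ in $L^p_{\mathrm{loc}}(\Pi)$ for every finite $p$. A truncation argument on $\{|x_1|\le R,\, x_2 \le R\}$, with the tails bounded via the above estimates, gives $E(\omega_n) \to E(\omega^*)$. The same philosophy applied to the bounded test function $x_2\mathbf{1}_{\{|x_1|\le R,\, x_2 \le R\}}$ yields $I(\omega_n) \to I(\omega^*)$, so $I(\omega^*) = \mu$ and $\omega^* \in \mathcal{A}_{\mu,\nu}$.

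Consequently $\mathcal{E}_\lambda(\omega^*) = S_{\mu,\nu,\lambda}$, so $\omega^* \in \Sigma_{\mu,\nu,\lambda}$, proving the first claim. The impulse equality $I(\omega^*) = \mu$ established en route gives the second claim, and by the definition of $\mathcal{A}_{\mu,\nu}$ it is in fact inherited by every element of $\Sigma_{\mu,\nu,\lambda}$. The main obstacle is the energy convergence $E(\omega_n) \to E(\omega^*)$ on the unbounded domain $\Pi$: weak $L^2$-convergence alone would not suffice, since mass could a priori escape either vertically (to $x_2 = \infty$) or horizontally (to $|x_1| = \infty$). The impulse constraint rules out vertical escape via Chebyshev, while Steiner symmetrization combined with Lemma~\ref{lm2.3} rules out horizontal escape by making its contribution to the energy integral uniformly small. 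The fact that $I(\omega_n)$ is fixed at $\mu > 0$ (and not merely bounded by $\mu$) is precisely what, via this tightness-based truncation, upgrades Fatou's one-sided $I(\omega^*) \le \mu$ to the saturation $I(\omega^*) = \mu$.
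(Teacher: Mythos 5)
Your setup (maximizing sequence, Steiner symmetrization, the $L^2$ bound from $\tfrac{1}{2\lambda}\|\omega_n\|_2^2=E(\omega_n)-\mathcal{E}_\lambda(\omega_n)$, weak $L^2$ convergence, and the truncation argument for $E(\omega_n)\to E(\omega^*)$ using Lemma \ref{lm2.3} horizontally and the impulse constraint vertically) matches the paper's proof. But there is a genuine gap at the step where you claim $I(\omega_n)\to I(\omega^*)$ and hence $I(\omega^*)=\mu$. Chebyshev applied to $I(\omega_n)=\mu$ controls the escaping \emph{mass}, $\int_{\{x_2>R\}}\omega_n\,\mathrm{d}x\le \mu/R$, but says nothing about the escaping \emph{impulse} $\int_{\{x_2>R\}}x_2\omega_n\,\mathrm{d}x$, which can remain of order one even as the mass there vanishes (impulse concentrated at heights tending to infinity). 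Likewise, Lemma \ref{lm2.3} gives decay of $\mathcal{G}\omega_n$ for large $|x_1|$, which controls the tail of the energy integral but not the tail of $\int x_2\omega_n$. So testing against $x_2\mathbf{1}_{\{|x_1|\le R,\,x_2\le R\}}$ only gives convergence of the truncated impulses; without uniform integrability of $x_2\omega_n$ at infinity (which you have not established, and which is exactly the concentration--compactness issue the paper defers to Section \ref{sect4}), you can only conclude $I(\omega^*)\le\mu$ by Fatou. Since $\mathcal{A}_{\mu,\nu}$ imposes the impulse as an \emph{equality} constraint, $I(\omega^*)<\mu$ would mean $\omega^*$ is not admissible and your existence proof collapses.

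The paper closes this gap differently: it accepts $I(\omega)\le\mu$ for the weak limit and then argues that strict inequality is impossible. If $I(\omega)<\mu$, one translates $\omega$ vertically, $\omega_\tau(x_1,x_2)=\omega(x_1,x_2-\tau)$ for a suitable $\tau>0$, to restore $I(\omega_\tau)=\mu$, so $\omega_\tau\in\mathcal{A}_{\mu,\nu}$; since $G_\Pi(x,y)=G(|x-y|)-G(|\bar{x}-y|)$ with $G$ strictly decreasing, moving the vorticity away from $\partial\Pi$ preserves $\|\omega\|_1$, $\|\omega\|_2$ and strictly increases $E$, giving $\mathcal{E}_\lambda(\omega_\tau)>\mathcal{E}_\lambda(\omega)\ge S_{\mu,\nu,\lambda}$, a contradiction with the definition of the supremum. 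You need this (or an equivalent) argument; the second assertion of the lemma is then immediate because every element of $\Sigma_{\mu,\nu,\lambda}$ lies in $\mathcal{A}_{\mu,\nu}$.
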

	\begin{proof}
		Let $ \{\omega_{j}\} _{j=1}^{\infty}\subset \mathcal{A}_{\mu, \nu}$ be a maximizing sequence. By Lemma \ref{lm2.4}, we may assume that $ \mathcal{E}_{\lambda}(\omega_{j})\ge 0 $ for all large $ j $.
		Using the definition of $ \mathcal{E}_{\la} $ and \eqref{2.3}, we have
		\begin{equation*}
			\|\omega_{j}\|_{2}^{2}\le 2\la \big(E(\omega_{j})-\mathcal{E}_{\lambda}(\omega_{j})\big)\le 2\la E(\omg_{j})\le
			C\|\omega_{j} \|_{1}^{3/2}\|\omega_{j} \|_{2}^{1/2} \le C\|\omg_{j}\|_{2}^{1/2}.
		\end{equation*}
		Hence $ \|\omg_{j}\|_{2} $ is bounded by a constant independent of $ j $. We are going to show the convergence of energy.
		According to Lemma \ref{lm2.2},  we may assume that $ \omg_{j} $ is Steiner symmetric by replacing $ \omg_{j} $ with its Steiner symmetrisation. We  assume $ \omg_{j}\rightarrow \omg $ weekly in $ L^{2}(\Pi) $ as $ j\rightarrow \infty $ by passing to a sub-sequence if necessary (still denoted by $ \{\omg_{j}\}_{j=1}^{\infty} $ ). It is easy to verify that
		\begin{equation*}
			\int_{\Pi} x_{2}\omg \mathrm{d} x\le \mu \quad \text{and}\quad  \int_{\Pi}\omega \mathrm{d} x \le \nu.
		\end{equation*}
		
		On the one hand, by Lemmas \ref{lm2.1} and \ref{lm2.3}, we have
		\begin{align*}
			2E(\omega_{j})&=\int_{\Pi}\int_{\Pi}\omega_{j}(x)G_{\Pi}(x,y)\omega_{j}(y)\mathrm{d} x\mathrm{d} y\\
			&\le \int_{|x_{1}|<R, 0<x_{2}<R}\int_{|y_{1}|<R, 0<x_{2}<R}\omega_{j}(x)G_{\Pi}(x,y)\omega_{j}(y)\mathrm{d} x\mathrm{d} y\\
			&\ \ \ \ \ +2\int_{x_{2}\ge R}\omega_{j}(x) \mathcal{G}\omega_j(x)\mathrm{d} x         +2\int_{|x_{1}|\ge R}\omega_{j}(x) \mathcal{G}\omega_j(x)\mathrm{d} x \\
			&\le \int_{|x_{1}|<R, 0<x_{2}<R}\int_{|y_{1}|<R, 0<x_{2}<R}\omega_{j}(x)G_{\Pi}(x,y)\omega_{j}(y)\mathrm{d} x \mathrm{d}y\\
			& \ \ \ \ \ \ +C\Big(R^{-3/8}\|\omega _{j}\|_{1}^\frac{3}{2}\|\omega _{j}\|_{2 }^{1/2}+ e^{-\frac{\sqrt{R}}{2}}\|\omega_{j}\|_{1}^{2}\Big)+2R^{-1}\|\mathcal{G}\omega_j\|_\infty\int_{\Pi}x_2\omega_{j}(x)\mathrm{d} x \\
			&\le \int_{|x_{1}|<R, 0<x_{2}<R}\int_{|y_{1}|<R, 0<x_{2}<R}\omega_{j}(x)G_{\Pi}(x,y)\omega_{j}(y)\mathrm{d}x\mathrm{d}y+C\Big(R^{-3/8}+e^{-\frac{\sqrt{R}}{2}}+R^{-1}  \Big).
		\end{align*}
		Thanks to $ G_{\Pi}(x, y) \in L_{loc}^{2}(\overline{\Pi}\times\overline{\Pi}) $, we get
		\begin{equation*}
			\limsup_{j\rightarrow\infty}E(\omega_{j})\le E(\omega)
		\end{equation*}
		by first letting $ j\rightarrow\infty $ and then $ R\rightarrow\infty $.
		
		On the other hand, we have
		\begin{equation*}
			2E(\omega_{j})=\int_{\Pi}\omega_{j}\mathcal{G}\omega_{j}\mathrm{d} x\ge \int_{|x_{1}|<R, 0<x_{2}<R}\int_{|y_{1}|<R, 0<x_{2}<R}\omega_{j}(x)\mathcal{G}\omega_{j}(y)\mathrm{d} x\mathrm{d} y ,
		\end{equation*}
		it implies that
		\begin{equation*}
			\liminf_{j\rightarrow\infty}E(\omega_{j})\ge E(\omega)
		\end{equation*}
		by first letting $ j\rightarrow\infty $ and then $ R\rightarrow\infty $.
		
		Hence, we conclude that
		\begin{equation*}
			\lim_{j\rightarrow\infty}E(\omega_{j})=E(\omega).
		\end{equation*}
		and
		\begin{equation*}
			\mathcal{E}_{\la}(\omega)=E(\omega)-\frac{1}{2\la}\int_{\Pi}\omega^{2}\mathrm{d} x\ge \lim_{j\rightarrow\infty}E(\omega_{j})-\frac{1}{2\la} \cdot\liminf_{j\rightarrow\infty}\int_{\Pi}\omega_{j}^{2}\mathrm{d} x=S_{\mu ,\nu ,\lambda}.
		\end{equation*}
		
		We now check that $\displaystyle \int_{\Pi} x_{2}\omega \mathrm{d} x=\mu$. Indeed, suppose not, then there exists some $ \tau>0 $ such that
		\begin{equation*}
			\omega_{\tau}(x_{1}, x_{2}):=\begin{cases}
				\omega(x_{1}, x_{2}-\tau),\quad &\text{if}\ \ x_{2}>\tau,\\
				0 ,& \text{if}\ \ x_{2}\le \tau,\\
			\end{cases}
		\end{equation*}
		belongs to $ \mathcal{A}_{\mu, \nu} $. By virtue of the facts that $ G_{\Pi}(x, y)=G(|x-y|)-G(|\bar{x}-y|) $ and $ G(s) $ is strictly decreasing for $s>0$, we check that
		\begin{equation*}
			{S} _{\mu ,\nu ,\lambda}=\mathcal{E}_{\la}(\omega)<\mathcal{E}_{\la}(\omega_{\tau})\le {S} _{\mu ,\nu ,\lambda}.
		\end{equation*}
		This is a contradiction and the proof is thus complete.
	\end{proof}
	
	From the proof of Lemma \ref{lm2.5}, we can obtain the monotonicity of $ S_{\mu ,\nu ,\lambda} $ with respect to $ \mu $.
	\begin{lemma}\label{lm2.6}
		If $0<\mu_1<\mu_2$, then $S_{\mu_1 ,\nu ,\lambda}<S_{\mu_2 ,\nu ,\lambda}$.
	\end{lemma}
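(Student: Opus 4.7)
The plan is to imitate the vertical shift argument that closed the proof of Lemma \ref{lm2.5}, but now using it to compare maximum values at two different impulse levels rather than merely ruling out a strict inequality in the constraint.

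First, I invoke Lemma \ref{lm2.5} to obtain a maximizer $\omega_1 \in \Sigma_{\mu_1,\nu,\lambda}$; since $\int_{\Pi} x_2\omega_1\, \mathrm{d}x = \mu_1 > 0$, the function $\omega_1$ is not identically zero, so $\|\omega_1\|_1 > 0$. I then choose the precise shift
\[
\tau := \frac{\mu_2 - \mu_1}{\|\omega_1\|_1} > 0,
\]
and define the translated competitor
\[
\omega_\tau(x_1, x_2) := \begin{cases} \omega_1(x_1, x_2 - \tau), & x_2 > \tau, \\ 0, & 0 < x_2 \leq \tau. \end{cases}
\]

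Next I verify that $\omega_\tau \in \mathcal{A}_{\mu_2,\nu}$ and that its energy strictly dominates that of $\omega_1$. Translation in the vertical direction preserves all $L^p$ norms, so $\|\omega_\tau\|_1 = \|\omega_1\|_1 \le \nu$ and $\|\omega_\tau\|_2 = \|\omega_1\|_2$; a direct calculation gives $\int_\Pi x_2\omega_\tau\,\mathrm{d}x = \mu_1 + \tau\|\omega_1\|_1 = \mu_2$. Changing variables $x \mapsto x + \tau \mathrm{e}_2$, $y \mapsto y + \tau \mathrm{e}_2$ in the energy integral yields
\[
2E(\omega_\tau) = \int_\Pi\int_\Pi \omega_1(x)\bigl[G(|x-y|) - G(\sqrt{(x_1-y_1)^2 + (x_2+y_2+2\tau)^2})\bigr]\omega_1(y)\,\mathrm{d}x\mathrm{d}y.
\]
Since $\omega_1$ is supported in $\Pi$ and not a.e.\ zero, and since $s \mapsto G(s)$ is strictly decreasing on $(0,\infty)$, the reflected term is strictly smaller than $G(|\bar x - y|)$ on the support; this forces $E(\omega_\tau) > E(\omega_1)$.

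Combining these facts and using $\|\omega_\tau\|_2 = \|\omega_1\|_2$,
\[
S_{\mu_2,\nu,\lambda} \ge \mathcal{E}_\lambda(\omega_\tau) = E(\omega_\tau) - \frac{1}{2\lambda}\|\omega_\tau\|_2^2 > E(\omega_1) - \frac{1}{2\lambda}\|\omega_1\|_2^2 = \mathcal{E}_\lambda(\omega_1) = S_{\mu_1,\nu,\lambda},
\]
which is the claim. The only substantive issue is the strictness of $E(\omega_\tau) > E(\omega_1)$, and this follows — exactly as in the last paragraph of the proof of Lemma \ref{lm2.5} — from the strict radial monotonicity of the Bessel kernel $G$ together with the fact that $x_2 + y_2 > 0$ on the support of $\omega_1 \otimes \omega_1$.
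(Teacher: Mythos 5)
Your proof is correct and follows exactly the route the paper intends: Lemma \ref{lm2.6} is stated as a consequence of the vertical-shift argument at the end of the proof of Lemma \ref{lm2.5}, and your choice $\tau=(\mu_2-\mu_1)/\|\omega_1\|_1$ together with the strict decrease of $G$ (which makes $-G(|\bar x-y|)$ strictly increase under the upward shift while $G(|x-y|)$ and all $L^p$ norms are preserved) is precisely that argument, carried out with the details filled in.
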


	\section{Uniqueness of Maximizers}\label{sect3}
	In the preceding section, we have proved the existence of maximizers for $ \mathcal{E}_{\la} $ over $\mathcal{A}_{\mu, \nu}$. In this section, we will establish the uniqueness of maximizers in the sense that any two maximizers differ by only a translation in the $x_1$-direction.
	\begin{lemma}\label{lm3.1}
		Each $ \omega\in \Sigma_{\mu ,\nu ,\lambda} $ satisfies
		\begin{equation}\label{3.1}
			\omega=\la(\mathcal{G}\omega-Wx_{2}-\gamma)_{+}
		\end{equation}
		for some constants $ W,\, \gamma\ge 0 $, uniquely determined by $ \omega $.
	\end{lemma}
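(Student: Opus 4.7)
The plan is a standard Lagrange-multiplier argument for the constrained maximization problem. Let $\omega \in \Sigma_{\mu,\nu,\lambda}$ and set $h := \mathcal{G}\omega - \omega/\lambda$. Using the self-adjointness of the kernel $G_\Pi$, one has
\[
\mathcal{E}_\lambda(\omega+s\eta) - \mathcal{E}_\lambda(\omega) = s\int_\Pi h\,\eta\,\mathrm{d}x + \frac{s^2}{2}\int_\Pi \eta\,\mathcal{G}\eta\,\mathrm{d}x - \frac{s^2}{2\lambda}\int_\Pi \eta^2\,\mathrm{d}x
\]
for any bounded $\eta$. For $\omega+s\eta$ to remain in $\mathcal{A}_{\mu,\nu}$ for small $s>0$ we require: (i) $\eta\geq 0$ a.e.\ on $\{\omega=0\}$; (ii) $\int_\Pi x_2\eta\,\mathrm{d}x = 0$ (using $\int x_2\omega = \mu$ from Lemma \ref{lm2.5}); (iii) $\int_\Pi \eta\,\mathrm{d}x \leq 0$ when $\int\omega=\nu$, with no restriction otherwise. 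Optimality of $\omega$ then gives $\int_\Pi h\,\eta\,\mathrm{d}x \leq 0$ for every such $\eta$.

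To extract the pointwise Euler--Lagrange identity on the support, I would fix Lebesgue density points $y^1,y^2\in\{\omega>0\}$ (and a third $y^3$ in the sub-case $\int\omega=\nu$) with distinct $x_2$-coordinates, and use test functions $\eta_r = \sum_i c_i\,\mathbf{1}_{B_r(y^i)\cap\{\omega>0\}}$, with coefficients $c_i$ chosen by linear algebra so that $\int x_2\eta_r=0$ and, in the active-$\nu$ case, $\int\eta_r=0$; a non-trivial solution exists precisely because the $y^i_2$ are distinct. Since $\eta_r$ vanishes on $\{\omega=0\}$, both $\omega\pm s\eta_r$ are admissible for small $s>0$, forcing $\int h\,\eta_r\,\mathrm{d}x = 0$. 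Letting $r\to 0$ via Lebesgue differentiation and varying the base points yields $h(y) = Wx_2(y)+\gamma$ a.e.\ on $\{\omega>0\}$ for constants $W,\gamma$ (with $\gamma=0$ automatic when $\int\omega<\nu$). A parallel one-sided test, adding mass near a density point $y^0\in\{\omega=0\}$ and subtracting compensating mass from $\{\omega>0\}$ subject to (ii) and (iii), produces $\mathcal{G}\omega(y^0)\leq Wx_2(y^0)+\gamma$ a.e.\ on $\{\omega=0\}$. Combining the support equality and the off-support inequality rearranges to $\omega = \lambda(\mathcal{G}\omega-Wx_2-\gamma)_+$ a.e.\ in $\Pi$.

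The signs $W,\gamma\geq 0$ come from the decay at infinity. Since $\omega\in L^1(\Pi)$, the set $\{\omega=0\}$ meets $\{x_2>R\}$ with positive measure for every $R>0$; along such points Lemma \ref{lm2.22} gives $\mathcal{G}\omega\to 0$, so the inequality $\mathcal{G}\omega\leq Wx_2+\gamma$ forces $W\geq 0$ (by taking $x_2\to\infty$) and $\gamma\geq 0$ (by taking $x_2\to 0^+$ with $|x_1|\to\infty$). Uniqueness of $(W,\gamma)$ is immediate: the equation $\mathcal{G}\omega-\omega/\lambda=Wx_2+\gamma$ holds on the positive-measure planar set $\{\omega>0\}$, and $1,x_2$ are linearly independent on any such set. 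The main technical obstacle I foresee is the careful construction of admissible multi-point test functions, specifically verifying non-degeneracy of the linear system determining the $c_i$ and correctly tracking the one-sided KKT sign $\gamma\geq 0$ when the constraint $\int\omega=\nu$ is active; once those are set up correctly in the style of \cite{burton_nonlinear_2013, abe2022stability}, the rest is Lebesgue differentiation.
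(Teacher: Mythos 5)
Your proposal follows essentially the same route as the paper: a first-variation/Lagrange-multiplier argument yielding $\mathcal{G}\omega-\omega/\lambda=Wx_2+\gamma$ on $\{\omega>0\}$ and the one-sided inequality off the support, the signs $W,\gamma\ge0$ from the decay of $\mathcal{G}\omega$ at infinity (Lemma \ref{lm2.22}), and uniqueness of $(W,\gamma)$ from the linear independence of $1$ and $x_2$ on the positive-measure set $\{\omega>0\}$. The paper extracts the multipliers differently --- it fixes two auxiliary functions $h_1,h_2$ supported in a superlevel set $\{\omega\ge\delta_0\}$ with prescribed mass and impulse moments, defines $W,\gamma$ explicitly as $\int_\Pi(\mathcal{G}\omega-\omega/\lambda)h_i\,\mathrm{d}x$, and then lets an arbitrary bounded $h\ge0$ on $\{0\le\omega\le\delta\}$ vary --- but that difference is cosmetic. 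One genuine slip in your write-up: for $\eta_r$ supported in $B_r(y^i)\cap\{\omega>0\}$ with some $c_i<0$, the perturbation $\omega-s\eta_r$ need not be non-negative for \emph{any} $s>0$, because the essential infimum of $\omega$ over a neighbourhood of a density point of $\{\omega>0\}$ can be $0$; to make the two-sided test legitimate you must take density points of $\{\omega>\delta\}$ (so perturbations of size $s\lesssim\delta$ stay admissible) and send $\delta\to0$ at the end, which is precisely the $\delta$-truncation the paper builds in from the start.
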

	\begin{proof}
		By Lemma \ref{lm2.4}, $ {S}_{\mu ,\nu ,\lambda}>0 $. There exists a  constant $ \delta_{0}>0$ such that $ \text{meas}(\{\delta _{0}<\omega\})>0 $. We take functions $ h_{1}, h_{2}\in L^{\infty}(\Pi) $ with compact support and satisfying
		\begin{equation*}
			\begin{cases}
				\supp(h_{1}), \, \supp(h_{2}) \subset \{ \delta _{0}\le \omega \}, \\
				\int_{\Pi}h_{1}(x)\mathrm{d} x=1, \quad \int_{\Pi}x_{2}h_{1}(x)\mathrm{d} x=0, \\
				\int_{\Pi}h_{2}(x)\mathrm{d} x=0, \quad  \int_{\Pi}x_{2}h_{2}(x)\mathrm{d} x=1. \\
			\end{cases}
		\end{equation*}
		We take an  arbitrary $ \delta \in (0, \delta _{0}) $ and compactly supported $ h\in L^{\infty}(\Pi) $, $ h\ge 0 $ on $ \{0\le \omega \le \delta \} $. We consider the test functions
		\begin{equation*}
			\omega _{\ep} =\omega +\ep \eta,\ \ep>0,
		\end{equation*}
		where
		\begin{equation*}
			\eta=h-\Big(\int_{\Pi}h\mathrm{d} x\Big)h_{1}-\Big(\int_{\Pi}x_{2}h\mathrm{d} x\Big)h_{2}.
		\end{equation*}
		If $ \ep $ is small enough, one can verify that $ \omega _{\ep}\in \mathcal{A}_{\mu, \nu} $. Since $ \omega  $ is a maximizer,
		\begin{equation*}
			0\ge \frac{\mathrm{d}\mathcal{E}_\lambda(\omega _{\ep})}{\mathrm{d}\ep} \Big| _{\ep=0}=\int_{\Pi}\Big(\mathcal{G}\omega-\frac{1}{\la}\omega \Big)\eta \mathrm{d} x.
		\end{equation*}
		We define
		\begin{equation*}
			\gamma:=\int_{\Pi}\Big(\mathcal{G}\omega -\frac{1}{\la}\omega \Big)h_{1}\mathrm{d}x, \quad   W:=\int_{\Pi}\Big(\mathcal{G}\omega -\frac{1}{\la}\omega \Big)h_{2}\mathrm{d} x,
		\end{equation*}
		and
		\begin{equation*}
			\Psi:=\mathcal{G}\omega -Wx_{2}-\gamma.
		\end{equation*}
		Hence we get
		\begin{align*}
			0&\ge \int_{\Pi}\Big(\mathcal{G}\omega -\frac{1}{\la}\omega \Big)\eta \mathrm{d}x\\
			&=\int_{\Pi} \Big(\Psi-\frac{1}{\la}\Big)h\mathrm{d}x.
		\end{align*}
		Since  the arbitrariness of $ h $, we have
		\begin{align*}
			\begin{cases}
				\Psi -\frac{1}{\lambda}\omega =0, \quad  &\mathrm{on}\ \{\omega >\delta \},\\
				\Psi -\frac{1}{\lambda}\omega \le0, \quad &\mathrm{on}\ \{0\le \omega \le \delta \}.\\
			\end{cases}
		\end{align*}
		By letting $ \delta \rightarrow0 $, we obtain $ \omega =\la \Psi_{+} $.
		
		According to $ \int_{\Pi}\omega \mathrm{d} x\le \nu $, we can take a sequence $ \{x_{i}\}_{i=1}^{\infty} $ with $ x_{i}=(x_{1i}, x_{2 i}) $ , such that $ x_{1i}\rightarrow\infty$, $x_{2i}\rightarrow0 $ and $ \omega (x_{i})\rightarrow0 $ as $ i\rightarrow\infty $. By \eqref{2.2} in Lemma \ref{lm2.22}, we have
		\begin{equation*}
			\limsup_{n\rightarrow\infty}\left(\mathcal{G}\omega (x_{i})-Wx_{2i}-\gamma \right)\le 0.
		\end{equation*}
		Hence $ \gamma\ge 0 $. Similarly, we can take another sequence $ \{x_{j}\}_{j=1}^{\infty} $ with $ x_{j}=(x_{1j}, x_{2 j}) $, such that $ x_{1j}\rightarrow0$, $x_{2j}\rightarrow\infty$ and $ \omega (x_{j})\rightarrow0 $ as $ j\rightarrow\infty $. By \eqref{2.2} in Lemma \ref{lm2.22}, we have
		\begin{equation*}
			0=\lim_{j\rightarrow\infty}(\mathcal{G}\omega (x_{j})-Wx_{2j}-\gamma)_{+}=\lim_{j\rightarrow\infty}(-Wx_{2j}-\gamma)_{+},
		\end{equation*}
		which implies $ W\ge 0 $.

		Next, we show the uniqueness of $ W $ and $ \gamma $. Suppose \eqref{3.1} holds with $W_1,\, \gamma_{1}\ge 0$. Then
		\begin{equation*}
			\mathcal{G}\omega(x)-W_{1}x _{2}-\gamma _{1}=\mathcal{G}\omega (x)-Wx _{2}-\gamma,
		\end{equation*}
		for all 	$x\in \Pi$ satisfying $\omega(x)>0$. Then,
		\begin{equation*}
			(W _{1}-W)x _{2}=\gamma-\gamma _{1},
		\end{equation*}
		which implies $W _{1}=W$ and $\gamma _{1}=\gamma$.
	\end{proof}
	
	The following result shows that if $\nu$ is sufficiently large, then $ W>0 $ and $ \gamma=0 $.
	\begin{lemma}\label{lm3.2}
		Given $\lambda>1$ and $\mu>0$, there exists $\nu_0>\mu \varrho(\lambda)$ such that if $\nu\ge \nu_0$, then
		the constants $ W>0$, $\gamma=0 $ in Lemma \ref{lm3.1}.
	\end{lemma}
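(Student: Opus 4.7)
The plan is to argue by contradiction. Suppose no such $\nu_0$ exists; then along a sequence $\nu_n\to\infty$ there are maximizers $\omega_n\in\Sigma_{\mu,\nu_n,\lambda}$ whose Lagrange multipliers $(W_n,\gamma_n)$ from Lemma~\ref{lm3.1} satisfy either $\gamma_n>0$ or $W_n=0$. The starting point is a Pohozaev-type identity: multiplying $\omega=\lambda(\mathcal{G}\omega-Wx_2-\gamma)_+$ by $\omega$ and integrating yields $\|\omega\|_2^2/\lambda=2E(\omega)-W\mu-\gamma\|\omega\|_1$, and substitution into $\mathcal{E}_\lambda=E-\|\omega\|_2^2/(2\lambda)$ gives
\begin{equation*}
    \mathcal{E}_\lambda(\omega)=\tfrac{1}{2}\bigl(W\mu+\gamma\,\|\omega\|_1\bigr)\qquad\text{for every } \omega\in\Sigma_{\mu,\nu,\lambda}.
\end{equation*}
Testing $\omega_L^{\lambda,\tilde W}$ with $\tilde W=\mu/I(\omega_L^{\lambda,1})$ shows $\mathcal{E}_\lambda(\omega_n)=S_{\mu,\nu_n,\lambda}\ge E_0:=\mathcal{E}_\lambda(\omega_L^{\lambda,\tilde W})=\mu\tilde W/2>0$, and when $\gamma_n>0$ the complementary-slackness condition forces $\|\omega_n\|_1=\nu_n\to\infty$.

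The crux is a uniform upper bound $\|\omega\|_1\le M=M(\lambda,\mu)$ valid for every maximizer. The intuition is that a function $\omega\ge 0$ with fixed impulse $\int x_2\omega=\mu$ and very large mass $\nu$ must concentrate in a strip of thickness $\sim\mu/\nu$ near $\partial\Pi$; since $G_\Pi$ vanishes on $\partial\Pi$—quantitatively $G_\Pi(x,y)\lesssim x_2y_2/(|x-y|^2+x_2y_2)$, augmented by the exponential decay of the Bessel kernel at infinity—such boundary concentration suppresses the kinetic energy $E(\omega)$ enough to drive $\mathcal{E}_\lambda(\omega)=E(\omega)-\|\omega\|_2^2/(2\lambda)$ below $E_0$ once $\nu$ is large. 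This contradicts $\mathcal{E}_\lambda(\omega_n)\ge E_0$, ruling out $\|\omega_n\|_1=\nu_n\to\infty$; the $W_n=0$ scenario is handled similarly, since the identity then forces $\gamma_n>0$ and hence $\|\omega_n\|_1=\nu_n$, reducing to the previous case. Thus choosing $\nu_0>M$ ensures $\|\omega\|_1<\nu$ for every maximizer at $\nu\ge\nu_0$.

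With $\int\omega<\nu$, the perturbation scheme of Lemma~\ref{lm3.1} extends: for $h\ge 0$ compactly supported on $\{\omega=0\}$, the impulse-corrected variation $\tilde\eta=h-(\int x_2 h)h_2$ is admissible for small $\epsilon>0$, since the $L^1$ constraint has slack to absorb the added mass. The first-variation inequality then gives $\int(\mathcal{G}\omega-Wx_2)h\le 0$ for all such $h$, forcing $\mathcal{G}\omega\le Wx_2$ on $\{\omega=0\}$; combined with $\mathcal{G}\omega-Wx_2=\omega/\lambda+\gamma\ge\gamma$ on $\supp(\omega)$ and the continuity of $\mathcal{G}\omega$, evaluation at a boundary point of the compact support forces $\gamma\le 0$, hence $\gamma=0$. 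Strict positivity $W>0$ follows from Lemma~\ref{lm2.6}: if $W=0$, a vertical shift $\omega_t(x_1,x_2)=\omega(x_1,x_2-t)$ combined with a rescaling to restore the impulse to $\mu$ yields, for small $t>0$, an admissible competitor with strictly larger $\mathcal{E}_\lambda$ (the strict monotonicity of $G_\Pi$ in $x_2,y_2$ for fixed $|x-y|$ gives $E(\omega_t)>E(\omega)$, and Lemma~\ref{lm2.6} ensures the gain survives rescaling), contradicting maximality. The main obstacle is the quantitative concentration estimate of the second paragraph, which demands a careful weighted Green's-function bound simultaneously exploiting the boundary vanishing of $G_\Pi$ and the exponential decay at infinity.
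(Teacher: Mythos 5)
Your overall architecture---derive the identity $2\mathcal{E}_\lambda(\omega)=W\mu+\gamma\|\omega\|_1$, show the mass constraint is inactive for large $\nu$ so that $\gamma=0$, then read off $W>0$---is reasonable, and the identity itself is correct (it is essentially how the paper obtains $W>0$, since $\gamma=0$ and $2\mathcal{E}_\lambda(\omega)=2S_{\mu,\nu,\lambda}>0$ give $W\mu>0$ at once; your alternative shift-and-rescale argument for $W>0$ is both unnecessary and harder to make rigorous, because the rescaling needed to restore the impulse also decreases $E$ and the sign of the net change is not obvious). The genuine gap is the step carrying all the weight: the claim that every maximizer satisfies a uniform bound $\|\omega\|_1\le M(\lambda,\mu)$ independent of $\nu$, justified only by the heuristic that boundary concentration suppresses the energy. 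You explicitly flag this as the main obstacle, and it is not clear the heuristic closes: for a maximizer with $\|\omega\|_1=\nu$ large the available a priori bound $\|\omega\|_2^2\le 2\lambda E(\omega)\le C\|\omega\|_1^{3/2}\|\omega\|_2^{1/2}$ degenerates as $\nu\to\infty$, so the kernel bound $G_\Pi(x,y)\lesssim x_2y_2/(\,\cdot\,)$ alone does not obviously force $\mathcal{E}_\lambda$ below the competitor value $E_0$. Moreover a uniform-in-$\nu$ mass bound is stronger than what is needed.

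The paper avoids all of this with a short elementary argument proving only the strict inequality $\int_\Pi\omega\,\mathrm{d}x<\nu$. Split the mass at height $x_2=2\mu/\nu$: Chebyshev applied to the impulse constraint gives $\int_{x_2\ge 2\mu/\nu}\omega\,\mathrm{d}x\le\nu/2$, while below that height the Euler--Lagrange relation of Lemma \ref{lm3.1} gives the pointwise bound $\omega\le\lambda\,\mathcal{G}\omega$ (valid since $W,\gamma\ge0$), whence
$\int_{0<x_2<2\mu/\nu}\omega\,\mathrm{d}x\le\lambda\int_\Pi\omega(x)\bigl(\int_{0<y_2<2\mu/\nu}G_\Pi(x,y)\,\mathrm{d}y\bigr)\mathrm{d}x=o(1)\,\nu$,
because the logarithmic kernel integrated over a strip of width $2\mu/\nu$ is $o(1)$ uniformly in $x$. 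Hence $\int_\Pi\omega\,\mathrm{d}x<\nu$ for $\nu$ large, the $L^1$ constraint is slack, and rerunning the first-variation argument with $\eta=h-(\int_\Pi x_2h\,\mathrm{d}x)h_2$ (no mass-correction term $h_1$) yields $\omega=\lambda(\mathcal{G}\omega-Wx_2)_+$, i.e.\ $\gamma=0$ by the uniqueness of the multipliers; then $0<2S_{\mu,\nu,\lambda}\le W\mu$ gives $W>0$. If you replace your unproven concentration estimate with this splitting, the remainder of your argument (which is essentially the same as the paper's) goes through.
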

	\begin{proof}
		Let $\lambda>1$ and $\mu>0$ be fixed and $ \omega \in \Sigma_{\mu ,\nu ,\lambda} $, we start to prove $ \gamma=0 $ for all large $\nu$. Since
		\begin{equation*}
			\mu=\int_{\Pi}x_{2}\omega \mathrm{d} x\ge\frac{2\mu}{\nu}\int_{x_{2}\ge \frac{2\mu}{\nu}}\omega \mathrm{d} x,
		\end{equation*}
		we have \begin{equation}\label{eq3.2}
			\int_{x_{2}\ge\frac{2\mu}{\nu}}\omega \mathrm{d} x\le \frac{\nu}{2}.
		\end{equation}		
		By Lemma \ref{lm3.1}, $ \omega \le \la \mathcal{G}\omega$, so
		\begin{equation*}
			\int_{0<x_{2}<\frac{2\mu}{\nu}}\omega \mathrm{d} x\le \int_{\Pi}\int_{0<y_{2}<\frac{2\mu}{\nu}}\la G_{\Pi}(x, y)\omega (x) \mathrm{d} y\mathrm{d} x.
		\end{equation*}
		By \eqref{ess}, for $\nu$ large we have
		\begin{equation*}
			\int_{0<y_{2}<\frac{2\mu}{\nu}}{G}_{\Pi}(x, y)\mathrm{d} y=o(1),
		\end{equation*}
		uniformly with respect to $x$. Hence
		\begin{equation}\label{eq3.3}
			\int_{0<x_{2}<\frac{2\mu}{\nu}}\omega \mathrm{d} x=o(1)\nu
		\end{equation}
		for $\nu$ large. Combining \eqref{eq3.2} and \eqref{eq3.3}, we see that for all sufficiently large $\nu$, it holds
		\begin{equation*}
			\int_{\Pi}\omega \mathrm{d} x< \nu.
		\end{equation*}
		Hence, we can take
		\begin{equation*}
			\eta=h-\Big(\int_{\Pi}x _{2}h\mathrm{d} x\Big)h _{2}.
		\end{equation*}
		Consider the test functions  $\omega+\varepsilon\eta$. Proceeding as in the proof of Lemma \ref{lm3.1}, we can obtain
		\begin{equation*}
			\omega =\la(\mathcal{G}\omega -Wx_{2})_{+},
		\end{equation*}
		which implies $\gamma=0$.
		
		Now, we turn to prove $W>0$ for $\nu$ large. By ~\eqref{3.1}, we have
		\begin{align*}
			0&<\int_{\Pi}\omega  \mathcal{G}\omega\mathrm{d} x -\frac{1}{\la}\int_{\Pi}\omega^2\mathrm{d} x\\
			&=\int_{\Pi}\omega\mathcal{G}\omega\mathrm{d} x-\int_{\Pi}\omega(\mathcal{G}\omega -Wx _{2})_+\mathrm{d} x\\
			&\le \int_{\Pi}\omega \mathcal{G}\omega\mathrm{d} x-\int_{\Pi}\omega(\mathcal{G}\omega-Wx _{2} )\mathrm{d} x\\
			&=W\mu,
		\end{align*}
		which implies $W>0$. The proof of Lemma \ref{lm3.2} is thus finished.
	\end{proof}
	
	The following result shows that each maximizer has compact support in $\overline{\Pi}$. We denote by $BUC(\overline{ \Pi})$ the space of all bounded uniformly continuous functions in $\overline{\Pi}$ and by $C^{\alpha}(\overline{\Pi})$ the space of all H$\ddot{\text{o}}$lder continuous functions of exponent $0<\alpha<1$ in $\overline{\Pi}$. For an integer $k\ge 0$, $BUC^{k, \alpha}(\overline{\Pi})$ denotes the space of all $\phi \in BUC(\overline{ \Pi})$ such that $\partial_{x}^{l} \phi \in BUC(\overline{\Pi}) \cap C^{\alpha}(\overline{\Pi})$, for $|l| \leq k$.
	\begin{lemma}\label{lm3.3}
		For each $\omega \in \Sigma_{\mu ,\nu ,\lambda}$,  $ \supp(\omega  ) $ is a compact set in $ \overline{\Pi} $.
	\end{lemma}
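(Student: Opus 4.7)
The plan is to exploit the fixed-point identity $\omega=\lambda(\mathcal{G}\omega-Wx_2)_+$ with $W>0$ from Lemmas \ref{lm3.1}--\ref{lm3.2}, together with the boundary vanishing $\mathcal{G}\omega|_{\partial\Pi}=0$ and the decay $\mathcal{G}\omega(x)\to 0$ at infinity (Lemma \ref{lm2.22}). The strategy is to upgrade this pointwise decay to a quantitative bound of the form $\mathcal{G}\omega(x_1,x_2)\le x_2\,\varepsilon(x_1)$ with $\varepsilon(x_1)\to 0$; the relation $Wx_2\le\mathcal{G}\omega$ on $\supp\omega$ then forces $W\le\varepsilon(x_1)$, which is impossible for $|x_1|$ large.

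First I would establish vertical confinement: Lemma \ref{lm2.1} gives $\mathcal{G}\omega\in L^\infty(\Pi)$, so $\omega\le\lambda\mathcal{G}\omega\in L^\infty(\Pi)$; then $Wx_2\le\|\mathcal{G}\omega\|_\infty$ on $\supp\omega$ yields $\supp\omega\subset\{x_2\le M\}$ for $M=\|\mathcal{G}\omega\|_\infty/W$. Standard elliptic regularity applied to $-\Delta\mathcal{G}\omega+\mathcal{G}\omega=\omega\in L^\infty$ puts $\mathcal{G}\omega$ in $BUC^{1,\alpha}(\overline{\Pi})$ with $\mathcal{G}\omega|_{\partial\Pi}=0$. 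The heart of the argument is to show $|\partial_{x_2}\mathcal{G}\omega(x_1,s)|\to 0$ as $|x_1|\to\infty$, uniformly in $s\in[0,M]$. Differentiating $G_\Pi(x,y)=G(|x-y|)-G(|\bar x-y|)$ and using $|\bar x-y|\ge|x-y|$ together with the radial monotonicity of $|G'|=K_1/(2\pi)$ gives the clean bound $|\partial_{x_2}G_\Pi(x,y)|\le 2|G'(|x-y|)|$, reducing matters to showing $\int_\Pi|G'(|x-y|)|\omega(y)\,\mathrm{d}y\to 0$. I would split at $|x-y|=1$: on the far region, $|G'(r)|\le C e^{-r/2}$, and splitting further by $|y_1|\lessgtr|x_1|/2$ yields either exponential smallness in $|x_1|$ or a tail $\int_{|y_1|\ge|x_1|/2}\omega\,\mathrm{d}y$ that vanishes by absolute continuity of $\omega\in L^1$; on the near region $\{|x-y|<1\}$, which lies inside $\{|y_1|\ge|x_1|-1\}$ once $|x_1|\ge 2$, H\"older's inequality with $|G'|\in L^{3/2}_{\mathrm{loc}}$ combined with the interpolation $\|\omega\|_{L^3(B(x,1))}^3\le\|\omega\|_\infty^2\int_{|y_1|\ge|x_1|-1}\omega\,\mathrm{d}y\to 0$ closes the argument.

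Putting things together, the fundamental theorem of calculus and $\mathcal{G}\omega(x_1,0)=0$ yield $\mathcal{G}\omega(x_1,x_2)\le x_2\,\varepsilon(x_1)$ on $\{x_2\le M\}$, where $\varepsilon(x_1):=\sup_{s\in[0,M]}|\partial_s\mathcal{G}\omega(x_1,s)|\to 0$; on $\supp\omega$ this reads $W\le\varepsilon(x_1)$, impossible for $|x_1|$ large, so $\supp\omega$ is bounded in the $x_1$-direction and hence compact in $\overline{\Pi}$. The main obstacle I anticipate is the near-diagonal piece of the gradient estimate: since $|G'(r)|$ is singular (of order $1/r$) at the origin, the boundedness of $\omega$ alone does not suffice, and one must trade $L^\infty$ for $L^1$-smallness via absolute continuity, i.e.\ $\int_{|y_1|\ge R}\omega\,\mathrm{d}y\to 0$ as $R\to\infty$, in order to control the mass of $\omega$ near $x$ uniformly when $|x_1|$ is large.
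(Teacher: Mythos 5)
Your argument is correct and arrives at the same key intermediate fact as the paper---namely that $\mathcal{G}\omega(x)/x_2\to 0$ as $|x|\to\infty$, which combined with $W>0$ confines the support---but by a genuinely different route. The paper's proof is soft: it upgrades $\mathcal{G}\omega$ to $BUC^{2,\alpha}(\overline{\Pi})$ by elliptic regularity, writes $\mathcal{G}\omega/x_2=\int_0^1(\partial_2\mathcal{G}\omega)(x_1,sx_2)\,\mathrm{d}s$ to conclude $\mathcal{G}\omega/x_2\in BUC(\overline{\Pi})$, puts $\mathcal{G}\omega/x_2$ in $L^2(\Pi)$ via Hardy's inequality, and then uses the general fact that a uniformly continuous $L^2$ function vanishes at infinity. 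You instead prove the quantitative statement $\sup_{0\le s\le M}|\partial_{x_2}\mathcal{G}\omega(x_1,s)|\to 0$ as $|x_1|\to\infty$ by differentiating the kernel directly, using $|\bar x-y|\ge|x-y|$ and the monotonicity of $K_1$ to reduce to $\int_\Pi|G'(|x-y|)|\,\omega(y)\,\mathrm{d}y$, and closing with a near/far splitting that trades $L^\infty$ control for $L^1$-tail smallness; your preliminary vertical confinement $\supp\omega\subset\{x_2\le\|\mathcal{G}\omega\|_\infty/W\}$ is what makes the restriction to a strip legitimate. Your version avoids Hardy's inequality and the Sobolev embedding and yields an explicit decay rate in $|x_1|$, at the cost of a longer kernel computation; both are sound.

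One small point to patch: you take $W>0$ and $\gamma=0$ for granted by citing Lemmas \ref{lm3.1}--\ref{lm3.2}, but Lemma \ref{lm3.2} only provides this for $\nu\ge\nu_0$, whereas Lemma \ref{lm3.3} is stated without that hypothesis and is later applied to $\Sigma_{\alpha_1,\nu_1,\lambda}$ in Step 2 of Theorem \ref{lm4.1}. As in the paper, you should dispose of the case $\gamma>0$ separately (there $\supp\omega\subset\overline{\{\mathcal{G}\omega\ge\gamma\}}$ is compact directly from Lemma \ref{lm2.22}) and rule out $W=\gamma=0$; once $\gamma=0$ forces $W>0$, your argument applies verbatim (and indeed also covers $W>0$, $\gamma>0$, since the extra $-\gamma$ only helps).
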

	\begin{proof}
		Let $ \omega \in \Sigma_{\mu ,\nu ,\lambda}  $. By \eqref{3.1},  we have $ \supp(\omega) =\overline{\{ x \in \Pi ~\mid~\mathcal{G}\omega -Wx_{2}-\gamma>0\}}$ for $ W\ge 0 $ and $ \gamma\ge 0 $. If $ \gamma>0 $,  the conclusion follows easily from \eqref{2.2}. If $ \gamma =0 $, we must have $ W>0 $. By \eqref{3.1},  we have $ \supp(\omega) =\overline{\{ x \in \Pi ~\mid~\mathcal{G}\omega -Wx_{2}>0\}}$. It follows from $ \omega \in L^{1} \cap L^{2} $ that $ \nabla^{2} \mathcal{G}\omega \in L^{p} $,  $ p \in (1, 2) $ and $ \nabla \mathcal{G}\omega \in  L^{q}$,  $ 1/q=1/p-1/2 $.  By \eqref{2.2} and \eqref{3.1},   $ \mathcal{G}\omega  $  satisfies the following elliptic equation
		\begin{align*}
			\begin{cases}
				-\Delta \psi +\psi =\lambda(\psi -Wx_2)_+, \quad&\mathrm{in}\ \ \Pi ,\\
				\psi =0,&\mathrm{on}\ \ \partial \Pi ,\\
				\psi \rightarrow 0,&\mathrm{as}\ \ |x|\rightarrow \infty .\\
			\end{cases}
		\end{align*}
		By the Sobolev embedding,  we have $ \mathcal{G}\omega \in BUC^{2, \alpha} (\Pi)$. Since $ \mathcal{G}\omega (x_{1}, 0)=0 $  and
		\begin{equation*}
			\frac{\mathcal{G}\omega }{x_{2}}=\int_{0}^{1}(\partial_{2}\mathcal{G}\omega )(x_{1}, x_{2}s)\mathrm{d} s,
		\end{equation*}
		hence $ \mathcal{G}\omega /x_{2} \in BUC^{1, \alpha}(\bar{\Pi}) $. Using Hardy's inequality (\cite{maza2011sobolev}), we get
		\begin{equation*}
			\|\mathcal{G}\omega /{x_{2}}\|_{2}\le 2\|\nabla\mathcal{G}\omega \|_{2},
		\end{equation*}
		and hence $ \mathcal{G}\omega /x_{2} \in BUC(\Pi)\cap L^{2}(\Pi) $. It follows that
		\begin{equation*}
			\frac{\mathcal{G}\omega(x) }{x_{2}}\rightarrow0\quad \text{as}\ |x| \rightarrow \infty,
		\end{equation*}
		which implies that $ \supp(\omega ) $ is a compact set of $\overline{\Pi}$.
	\end{proof}
	
	Next, we consider positive solutions to the problem
	\begin{equation}\label{3.6}
		\begin{cases}
			-\Delta \psi +\psi =\lambda (\psi -Wx_2)_+, \quad &\mathrm{in}\ \ \Pi ,\\
			\psi =0,&		\mathrm{on} \  \partial \Pi ,\\
			\psi(x) \rightarrow 0,&		\mathrm{as}\ \  |x|\rightarrow \infty .\\
		\end{cases}
	\end{equation}

	\begin{lemma}\label{lm3.4}
		Let $ \psi \in BUC^{2, \alpha}(\overline{ \Pi})$,  $ 0<\alpha<1 $, be a positive solution of \eqref{3.6} for some $ W>0 $  and $ \la>1 $.
		Then $ \psi (x)=\psi_L(x+c\textbf{e}_{1}) $ for some $ c\in \mathbb{R} $,  where $\psi_{L}= \Psi_{L}$ and $ \Psi_{L} $ is defined by \eqref{1.8}.

	\end{lemma}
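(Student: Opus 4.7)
The plan is to combine the moving plane method in the $x_1$-direction with separation of variables in polar coordinates. The moving plane identifies the axis of symmetry of $\psi$, and then the explicit form \eqref{1.8} is forced once we establish that the vortex region is a half-disk.

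First, as in Lemma~\ref{lm3.3}, the set $\Omega:=\{x\in\Pi:\psi(x)>Wx_2\}$ is bounded: the quotient $\psi/x_2$ belongs to $BUC^{1,\alpha}(\overline{\Pi})$, tends to $0$ at infinity, and $\Omega\subset\{\psi/x_2>W\}$. Next I would run the moving plane in $x_1$. For $t\in\mathbb{R}$, set $x^t:=(2t-x_1,x_2)$, $\Sigma_t^+:=\{x_1>t\}\cap\Pi$, and $w_t(x):=\psi(x)-\psi(x^t)$. Since the reflection fixes $x_2$ and $s\mapsto(s-Wx_2)_+$ is $1$-Lipschitz,
\[
-\Delta w_t+(1-\lambda c_t(x))w_t=0 \quad\text{in }\Sigma_t^+,
\]
for some measurable $c_t(x)\in[0,1]$, together with $w_t=0$ on $\partial\Sigma_t^+$ and $w_t(x)\to 0$ as $|x|\to\infty$. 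For $t$ sufficiently large the compactness of $\Omega$ gives $w_t\le 0$ on $\Sigma_t^+$, and the Berestycki--Nirenberg moving plane argument in unbounded domains (using decay at infinity to control the sign-changing zeroth-order term $1-\lambda c_t$) allows sliding $t$ leftwards to a critical value $t^*$ at which the strong maximum principle forces $w_{t^*}\equiv 0$. Thus $\psi(2t^*-x_1,x_2)=\psi(x_1,x_2)$, and after translating by $c:=-t^*$ I may assume $\psi$ is even in $x_1$, with $\partial_{x_1}\psi\le 0$ on $\{x_1>0\}$.

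Having fixed the axis of symmetry, I pass to polar coordinates $(r,\theta)\in[0,\infty)\times[0,\pi]$ centred at the origin and aim to show $\Omega=B_a(0)\cap\Pi$ for some $a>0$. On $\partial\Omega\cap\Pi$ one has the overdetermined matching $\psi=Wx_2$ and $\nabla\psi$ continuous across the free boundary (from $\psi\in BUC^{2,\alpha}$); combined with the axial symmetry from the moving plane, and with the equations $-\Delta\psi+\psi=0$ outside $\Omega$ and $\Delta\psi+(\lambda-1)\psi=\lambda Wx_2$ inside $\Omega$, a Serrin-type rigidity argument (analogous to the uniqueness proof for the Chaplygin--Lamb dipole in \cite{burton1996uniqueness} and its variant in \cite{abe2022stability}) pins $\partial\Omega\cap\Pi$ down to a circular arc $\{r=a\}$. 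Once $\Omega=B_a(0)\cap\Pi$ is known, the ansatz $\psi=g(r)\sin\theta$ in $\Omega$ reduces the inner equation to the inhomogeneous Bessel ODE
\[
g''+\tfrac{1}{r}g'-\tfrac{1}{r^2}g+(\lambda-1)g=\lambda Wr,\qquad g(0)=0,
\]
whose unique bounded solution is $g(r)=A_LJ_1((\lambda-1)^{1/2}r)+\tfrac{\lambda W}{\lambda-1}r$. Outside $\Omega$ the ansatz $\psi=h(r)\sin\theta$ and decay at infinity give $h(r)=D\,K_1(r)$ with $D=Wa/K_1(a)$, and the $C^1$-matching at $r=a$ yields precisely \eqref{1.99}; positivity of $\psi$ in $\Pi$ selects the smallest positive root, so $\psi(x)=\Psi_L(x+c\mathrm{e}_1)$.

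The principal obstacle is the Serrin-type rigidity step identifying $\partial\Omega$ as a circle: the moving plane in $x_1$ delivers only reflection symmetry about a vertical line, whereas upgrading to rotational symmetry of $\partial\Omega$ requires exploiting the overdetermined nature of the interface conditions together with the specific Helmholtz / modified-Helmholtz structure of the equation on either side of the free boundary.
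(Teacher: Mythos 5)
The ODE and separation-of-variables part of your outline matches the paper, but the symmetry step --- the heart of the lemma --- has a genuine gap. Your moving plane in $x_1$ yields only reflection symmetry about a vertical line, and you defer the upgrade to rotational symmetry of $\partial\Omega$ to a ``Serrin-type rigidity argument,'' which you yourself flag as the principal obstacle but do not supply. As described, this step does not go through: the matching conditions $\psi = Wx_2$ and continuity of $\nabla\psi$ across $\partial\Omega$ are automatically satisfied by any $C^{1,1}$ solution of the semilinear equation $-\Delta\psi + \psi = \lambda(\psi - Wx_2)_+$ (the nonlinearity is Lipschitz), so the interface carries no overdetermined information to exploit in a Serrin argument; and moving planes in directions other than $\mathrm{e}_1$ are obstructed both by the half-plane geometry and by the $Wx_2$ term, which breaks the rotational invariance of the equation.

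The paper's actual route --- and the one used in Burton \cite{burton1996uniqueness} and Abe--Choi \cite{abe2022stability}, the very references you cite for your rigidity step --- is different: one sets $\phi(y) = \psi(x_1,x_2)/x_2$ with $x_1 = y_4$, $x_2 = |y'|$, $y = (y',y_4) \in \mathbb{R}^4$, which converts the problem into $-\Delta_y \phi + \phi = \lambda(\phi - W)_+$ in all of $\mathbb{R}^4$. The lifted equation is genuinely rotationally invariant (the troublesome $Wx_2$ becomes the constant $W$), $\phi$ is continuous with $(\phi - W)_+$ compactly supported, and the method of moving planes in integral form \cite{chen,gidas1979symmetry}, applied in every direction of $\mathbb{R}^4$, gives radial symmetry of $\phi$ about a point which, by axisymmetry in $y'$, lies on the $y_4$-axis. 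Radiality of $\phi$ is precisely the statement $\psi = \eta(r)\sin\theta$ after an $x_1$-translation, from which $\Omega = B_a(0)\cap\Pi$ and your ODE computation follow. Without this lifting, or a genuine substitute for the rotational-symmetry step, the proof is incomplete.
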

	
	\begin{proof}
		For $ y=(y', y_{4})\in \mathbb{R}^{4}$, $ y'=(y_{1},  y_{2}, y_{3})$, we set $ x_{1}=y_{4}$, $x_{2}=|y'| $ and
		\begin{equation}\label{3.7}
			\phi(y)= \frac{\psi(x_{1}, x_{2})}{x_{2}}.
		\end{equation}
		By a direct calculation,  we have
		\begin{align*}
			\begin{cases}
				-\Delta _y\phi +\phi =\lambda (\phi -W)_+,\quad&\mathrm{in} \ \ \mathbb{R} ^4,\\
				\phi \rightarrow 0,&		\mathrm{as} \  \ |y|\rightarrow \infty .\\
			\end{cases}
		\end{align*}
		Thus  $ \phi $ satisfies the integral equation
		\begin{equation}\label{3.8}
			\phi(x)=\int_{\mathbb{R}^{4}}G_4(x-y)\la(\phi(y)-W)_{+}\mathrm{d} y.
		\end{equation}
		where $ G_4 $ is the fundamental solution of the Bessel equation in	$ \mathbb{R}^{4}$.

		Since $\phi$ is continuous and the support of $(\phi(y)-W)_{+}$ is compact, one can apply the standard method of moving planes in the integral form  to deduce that $\phi$ is radially symmetric with respect to some point $y^{0}=\left(0, c\right)\in \mathbb{R}^{4}$, see \cite{chen, gidas1979symmetry} for more details.
		
		Hence $ \varphi(y)=\phi(y', y_{4}+c) $ is radially symmetric	and $|y|=|x| $, we have
		\begin{equation*}
			\frac{\psi(x_{1}+c, x_{2})}{x_{2}}=\varphi(|x|).
		\end{equation*}
		By translation of $ \psi $ for the $x_{1}$-variable,  we may assume that $ c=0 $. By the polar coordinate $ x_{1}=r\cos\theta$,  $x_{2}=r\sin\theta$,  we define
		\begin{equation*}
			\Psi(x)=\psi(x)-Wx_{2}=(\varphi(r)-W)r\sin\theta=:\eta(r)\sin\theta.
		\end{equation*}
		By \eqref{3.6},  $ \Psi $ satisfies
		\begin{equation}\label{3.9}
			\begin{cases}
				-\Delta \Psi+\Psi=\la \Psi_{+},  &\text{in} \, \, \, \Omega, \\
				-\Delta \Psi+\Psi=0,  &\text{in} \, \, \,  \Pi\backslash\Omega, \\
				\Psi=0,  & \text{on}\, \, \, \partial\Pi\cup\partial \Omega, \\
				\partial_{x_{1}}\Psi \rightarrow0,  \quad    \partial_{x_{2}}\Psi \rightarrow-W, \quad &\text{as} \, \, \, |x|\rightarrow\infty.
			\end{cases}
		\end{equation}
		where $ \Omega=B_{a}(0)\cap \Pi $ for some $ a>0 $.  Using $ \eqref{3.9}_{1} $ , we have
		\begin{equation}\label{3.10}
			\begin{cases}
				r^2\eta''+r\eta'+((\la-1)r^2-1)\eta-Wr^3=0, \ \eta>0, \  0<r<a, \\
				\eta(a)=0.\\
			\end{cases}
		\end{equation}
		We take $ \eta_{0}=\eta-\frac{W}{\la-1} r$,  then $ \eta_{0} $ satisfies
		\begin{equation}\label{3.11}
			\begin{cases}
				r^2\eta_{0}''+r\eta_{0}'+((\la-1)r^2-1)\eta_{0}=0, \ \eta_{0}(r)>-\frac{W}{\la-1}r, \  0<r<a, \\
				\eta_{0}(a)=-\frac{W}{\la-1}a.\\
			\end{cases}
		\end{equation}
		Since $ \eta_0(0) $ is bounded, we have 
		\begin{equation*}
			\eta _0=-\frac{Wa}{\lambda -1}\cdot \frac{J_1\Big( \big( \lambda -1 \big) ^{1/2}r \Big)}{J_1\Big( \big( \lambda -1 \big) ^{1/2}a \Big)}.
		\end{equation*}
		Similarly,  in	$ \Pi\backslash\Omega $,  $\eta  $ satisfies
		\begin{equation*}
			r^{2}\eta''+r\eta '-(r^{2}+1)\eta-Wr^{3}=0.
		\end{equation*}
		We take $ \eta_{1}=\eta+Wr $,  then $ \eta_{1} $ satisfies
		\begin{equation*}
			r^{2}\eta_{1}''+r\eta_{1}'-(r^{2}+1)\eta_{1}=0.
		\end{equation*}
		Since $ \eta_{1} $ is decaying at $ \infty $ and $ \eta(a)=0 $, we obtain
		\begin{equation*}
			\eta_{1}=\frac{Wa}{K_{1}(a)}K_{1}(r).
		\end{equation*}
		By $\Psi>0 $ in $B_{a}(0)\cap \Pi$ and the continuity of $ \partial_{r}\Psi $ at $a$,   it follows that	
		$a$ is the smallest positive solution of the following equation
		\begin{equation}\label{eq:3.12}
			a\Big(\frac{K_{1}'(a)}{K_{1}(a)}+\frac{1}{(\la-1)^{1/2}}\cdot\frac{J_{1}'((\la-1)^{1/2}a)}{J_{1}((\la-1)^{1/2}a)}\Big)=\frac{\la}{\la-1}.
		\end{equation}
		Hence we get
		\begin{equation*}
			\Psi (x)=\Psi _L(x)-Wx_2=\begin{cases}
				\Big( A_LJ_1\big((\lambda -1)^{1/2}r\big)+\frac{W}{\lambda -1}r \Big) \sin\theta , \quad& r\le a,\\
				\left( \frac{Wa}{K_1(a)}K_1(r)-Wr \right) \sin\theta, &		r>a,\\
			\end{cases}
		\end{equation*}	
		where 
		\begin{equation*}
			A_L=-\frac{Wa}{\lambda -1}\cdot \frac{1}{J_1((\lambda -1)^{1/2}a)}.
		\end{equation*}
	\end{proof}
	
	\begin{remark}\label{remark:3.5}
		We want to show that equation \eqref{eq:3.12} is solvable. Define the set as follows
		$$A=\Big\{ t\in \mathbb{R} _+ \mid  J_1\big( ( \lambda -1 ) ^{1/2}t  \big) \ne 0 \Big\} $$   and the function
		\begin{equation}
			\mathcal{W} \left( t \right) =\ln \frac{K_1\left( t \right) \cdot |J_1( \left( \lambda -1 \right) ^{1/2}t ) |^{1/\left( \lambda -1 \right)}}{t^{\lambda /\left( \lambda -1 \right)}},\quad  t \,\in \,A.
		\end{equation}
		By the properties of $ J_{1} $,  we know that $ \mathbb{R}_{+}\backslash A $ is at most countable. Suppose
		\begin{equation*}
			\mathbb{R}_{+}\backslash A=\left\{ x_1,x_2,\cdots ,x_{{n}},\cdots \right\}, \quad \mathrm{for} \,\, x_{{i+1}}>x_{{i}}>0, \quad  i\in \left\{ 1,2,3,\cdots \right\}.
		\end{equation*}
		We find that
		\begin{equation*}
			\lim_{{t\rightarrow x_{\mathrm{i}}}}\mathcal{W} \left( t \right) =-\infty,
		\end{equation*}
		and
		\begin{equation*}
			\mathcal{W} \left( t \right) >-\infty ,\quad   \mathrm{for} \,\, t\in\left( x_{\mathrm{i}}, x_{\mathrm{i}+1} \right),
		\end{equation*}
		where $ i\in \left\{ 1,2,3,\cdots \right\}$. Therefore, on each interval $ (x_{i}, x_{i+1}) $, $ \mathcal{W} $ has at least one extreme point, then \eqref{eq:3.12} is solvable. By direct calculation, we obtain
		\begin{equation*}
			\mathcal{W} '\left( t \right) =\frac{K_1'(t)}{K_1(t)}+\frac{1}{(\lambda -1)^{1/2}}\cdot \frac{J_1'((\lambda -1)^{1/2}t)}{J_1((\lambda -1)^{1/2}t)}-\frac{\lambda}{\lambda -1}\cdot \frac{1}{t},
		\end{equation*}
		and
		\begin{equation*}
			\lim_{t\rightarrow 0^+} \mathcal{W}'\left( t \right) =-\infty .
		\end{equation*}
		Thus there exists a smallest positive solution $a$ to equation \eqref{eq:3.12}.
	\end{remark}
	
	\begin{corollary}\label{add-cor}
		For $\lambda>1$, $\mu>0$ and $\nu\ge \nu_0$, we have
		\begin{equation*}
			\Sigma_{\mu, \nu, \lambda}=\left\{\omega_{L}^{\lambda, W}\left(\cdot+c \mathrm{e}_{1}\right) \mid c \in \mathbb{R} \right\},
		\end{equation*}
		where $W=\mu/I(\omega_L^{\lambda, 1})$.
	\end{corollary}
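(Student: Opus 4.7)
The plan is to collect the pieces built up in Sections \ref{sect2} and \ref{sect3} and close the loop. Fix $\lambda>1$, $\mu>0$, and $\nu\ge\nu_0$ as in Lemma \ref{lm3.2}. By Lemma \ref{lm2.5} the set $\Sigma_{\mu,\nu,\lambda}$ is nonempty, so let $\omega\in\Sigma_{\mu,\nu,\lambda}$ be arbitrary. The strategy is: (i) use Lemma \ref{lm3.2} to reduce the Euler--Lagrange equation to $\omega=\lambda(\mathcal{G}\omega-Wx_2)_+$ with $W>0$; (ii) verify that $\psi:=\mathcal{G}\omega$ satisfies the hypotheses of Lemma \ref{lm3.4}; (iii) invoke Lemma \ref{lm3.4} to conclude $\psi=\Psi_L^{\lambda,W}(\cdot+c\mathrm{e}_1)$ for some $c\in\mathbb{R}$; (iv) pin down $W$ from the impulse constraint $I(\omega)=\mu$, which by Lemma \ref{lm2.5} is an equality, not an inequality.

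For step (ii), I would argue as follows. Since $S_{\mu,\nu,\lambda}>0$, the maximizer $\omega$ is not identically zero, so $\psi=\mathcal{G}\omega$ is strictly positive on $\Pi$ (one sees this directly from the formula \eqref{1.10}, the strict positivity of $G(r)$ for $r>0$, and the fact that $|x-y|<|\bar x-y|$ for $x,y\in\Pi$). The elliptic equation \eqref{3.6} and the boundary conditions are an immediate consequence of the Euler--Lagrange identity \eqref{3.1} (with $\gamma=0$), Lemma \ref{lm2.22}, and the definition of $\mathcal{G}$. The regularity $\psi\in BUC^{2,\alpha}(\overline{\Pi})$ was already established in the course of the proof of Lemma \ref{lm3.3} using Sobolev embedding and the compactness of $\supp(\omega)$. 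Thus all hypotheses of Lemma \ref{lm3.4} are verified, and we obtain $\mathcal{G}\omega=\Psi_L^{\lambda,W}(\cdot+c\mathrm{e}_1)$ for some $c\in\mathbb{R}$. Inserting this into the relation $\omega=\lambda(\mathcal{G}\omega-Wx_2)_+$ and using translation invariance of $x_2$ yields $\omega=\omega_L^{\lambda,W}(\cdot+c\mathrm{e}_1)$.

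To determine $W$, I would use the scaling identity $\omega_L^{\lambda,W}=W\,\omega_L^{\lambda,1}$ together with translation invariance of the impulse in the $x_1$-variable. This gives
\begin{equation*}
\mu=I(\omega)=I\bigl(\omega_L^{\lambda,W}(\cdot+c\mathrm{e}_1)\bigr)=I(\omega_L^{\lambda,W})=W\cdot I(\omega_L^{\lambda,1}),
\end{equation*}
so $W=\mu/I(\omega_L^{\lambda,1})$, independently of $\omega\in\Sigma_{\mu,\nu,\lambda}$. This proves the inclusion $\Sigma_{\mu,\nu,\lambda}\subseteq\{\omega_L^{\lambda,W}(\cdot+c\mathrm{e}_1):c\in\mathbb{R}\}$. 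The reverse inclusion is immediate: at least one translate lies in $\Sigma_{\mu,\nu,\lambda}$ (by nonemptiness and the first inclusion), and both $\mathcal{E}_\lambda$ and the constraints defining $\mathcal{A}_{\mu,\nu}$ are invariant under $x_1$-translations, so every translate is a maximizer as well.

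I do not expect any serious obstacle here: all the analytic work has been absorbed into Lemmas \ref{lm3.1}--\ref{lm3.4}. The only mildly delicate point is verifying the positivity and regularity hypotheses of Lemma \ref{lm3.4} for $\psi=\mathcal{G}\omega$, but this follows from general facts about the kernel $G_\Pi$ and from the compact-support argument already given in the proof of Lemma \ref{lm3.3}.
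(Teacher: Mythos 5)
Your proposal is correct and follows exactly the route the paper intends: the corollary is stated without a written proof precisely because it is the concatenation of Lemma \ref{lm2.5} (existence and saturation of the impulse constraint), Lemmas \ref{lm3.1}--\ref{lm3.2} (Euler--Lagrange identity with $\gamma=0$, $W>0$), the regularity/positivity checks from Lemma \ref{lm3.3}, the classification Lemma \ref{lm3.4}, and the scaling $\omega_L^{\lambda,W}=W\omega_L^{\lambda,1}$ to pin down $W$. Your handling of the reverse inclusion via translation invariance is also the standard (and correct) closing step.
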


	\section{Compactness of Maximizing Sequences}\label{sect4}
	In this section, we shall prove the compactness of a  maximizing sequence up to translations for the $x_{1}$-variable by using a concentration compactness principle due to P. L. Lions.
	\begin{theorem}\label{lm4.1}
		Let $\lambda>1$, $\mu>0$ and $\nu\ge \nu_0$. Suppose that $ \{\omega _{n}\}_{n=1}^{\infty} $ is a maximizing  sequence in the sense that
		\begin{equation}\label{4.1}
			\omega _{n}\ge 0, \quad \omega _{n}\in L^{1}\cap L^{2}, \quad \int_{\Pi} \omega _{n} \mathrm{d} x \le \nu,  \quad \|\omega _{n}\|_{2}\le C, \quad \forall n\ge 1,
		\end{equation}
		\begin{equation}\label{4.2}
			\mu_{n}=\int_{\Pi}x_{2}\omega _{n}\mathrm{d} x \rightarrow \mu,  \quad    \text{as}~  n\rightarrow\infty,
		\end{equation}
		and
		\begin{equation}\label{	4.3}
			\mathcal{E}(\omega _{n})\rightarrow S_{\mu, \nu, \lambda}, \quad  \text{as}~n\rightarrow\infty.
		\end{equation}
		Then there exists $ \omega \in \Sigma_{\mu, \nu, \lambda} $,  a sub-sequence $ \{\omega _{n_{k}}\}_{k=1}^{\infty} $ and a sequence of real numbers $ \{c_{k}\}_{k=1}^{\infty} $ such that as $ k\rightarrow\infty $,  it holds
		\begin{equation}\label{4.4}
			\omega _{n_{k}}(\cdot+c_{k}\mathrm{e}_{1})\rightarrow \omega  \quad   \text{in} \, \, L^{2}(\Pi),
		\end{equation}
		and
		\begin{equation}\label{4.5}
			x_{2}\omega _{n_{k}}(\cdot+c_{k}\mathrm{e}_{1})\rightarrow x_{2}\omega  \quad  \text{in} \, \, L^{1}(\Pi).
		\end{equation}
	\end{theorem}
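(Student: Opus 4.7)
The plan is to invoke P.\,L.~Lions' concentration-compactness principle for $\{\omega_n\}$, rule out the vanishing and dichotomy alternatives, and combine the resulting tightness with the variational characterization of Lemma~\ref{lm2.5} and the uniqueness result Corollary~\ref{add-cor} to upgrade weak convergence to strong convergence.

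Since $E(\omega_n) = \mathcal{E}_\lambda(\omega_n) + \frac{1}{2\lambda}\|\omega_n\|_2^2 \ge S_{\mu,\nu,\lambda}/2 > 0$ for $n$ large, estimate \eqref{2.3} forces $\|\omega_n\|_1 \ge c_0 > 0$; along a subsequence $\|\omega_n\|_1 \to \ell \in [c_0,\nu]$. Lions' lemma applied to $\omega_n/\ell$ produces one of three alternatives: (V) vanishing, (D) dichotomy, or (C) tightness after translation. Vanishing is excluded by showing $E(\omega_n) \to 0$: split $G_\Pi$ via \eqref{ess} into its logarithmic core and exponentially decaying tail, cover $\Pi$ by unit balls, and use the interpolation $\|\omega_n\chi_{B_2(y)}\|_{4/3} \le \|\omega_n\chi_{B_2(y)}\|_1^{1/2}\|\omega_n\|_2^{1/2}$ together with $\sup_y \|\omega_n\chi_{B_2(y)}\|_1 \to 0$ to conclude, contradicting the positivity of $S_{\mu,\nu,\lambda}$.

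Dichotomy is the principal obstacle. In (D), $\omega_n = \omega_n^{(1)} + \omega_n^{(2)} + r_n$ with $\|\omega_n^{(i)}\|_1 \to \alpha_i > 0$, $\|r_n\|_1 \to 0$, and disjoint supports separated by distance tending to infinity; a careful choice of smooth cut-offs further ensures $\|r_n\|_2 \to 0$. Exponential decay of $G_\Pi$ then yields $E(\omega_n) = E(\omega_n^{(1)}) + E(\omega_n^{(2)}) + o(1)$, disjointness gives $\|\omega_n\|_2^2 = \|\omega_n^{(1)}\|_2^2 + \|\omega_n^{(2)}\|_2^2 + o(1)$, and hence $\mathcal{E}_\lambda(\omega_n) = \mathcal{E}_\lambda(\omega_n^{(1)}) + \mathcal{E}_\lambda(\omega_n^{(2)}) + o(1)$. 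Along a further subsequence $\mu_n^{(i)} := \int x_2 \omega_n^{(i)} \to \bar\mu_i$ with $\bar\mu_1 + \bar\mu_2 = \mu$. If both $\bar\mu_i > 0$, then $\mathcal{E}_\lambda(\omega_n^{(i)}) \le S_{\mu_n^{(i)},\nu,\lambda} \to S_{\bar\mu_i,\nu,\lambda}$, giving $S_{\mu,\nu,\lambda} \le S_{\bar\mu_1,\nu,\lambda} + S_{\bar\mu_2,\nu,\lambda}$; but Corollary~\ref{add-cor} together with the scaling $\omega_L^{\lambda,W} = W\omega_L^{\lambda,1}$ yields $S_{\mu,\nu,\lambda} = c_\lambda\mu^2$ with $c_\lambda > 0$, a strictly super-additive function of $\mu$, a contradiction. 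If instead $\bar\mu_2 = 0$, the reflection formula $G_\Pi(x,y) = G(|x-y|) - G(|\bar x - y|)$ and the pointwise estimate $\ln(|\bar x - y|/|x-y|) \lesssim x_2 y_2/|x-y|^2$ force $E(\omega_n^{(2)}) \to 0$, whence $\mathcal{E}_\lambda(\omega_n^{(2)}) \le 0$ asymptotically; strict negativity (when $\|\omega_n^{(2)}\|_2 \not\to 0$) contradicts $\mathcal{E}_\lambda(\omega_n) \to S_{\mu,\nu,\lambda}$, while $\|\omega_n^{(2)}\|_2 \to 0$ reduces us back to alternative (C) applied to $\omega_n^{(1)}$.

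Once (V) and (D) are excluded, (C) furnishes shifts $(c_k, d_k) \in \mathbb{R}^2$ along which $\omega_{n_k}$ is tight; the bound $\int_{x_2 > R}\omega_n \le R^{-1}\int x_2\omega_n \le \mu/R + o(1)$ forces $\{d_k\}$ to be bounded, so after a harmless further shift we may take $d_k = 0$ and set $\tilde\omega_k := \omega_{n_k}(\cdot + c_k\mathrm{e}_1)$. A subsequence has $\tilde\omega_k \rightharpoonup \omega$ in $L^2$. The argument of Lemma~\ref{lm2.5}, with tightness replacing Steiner symmetrization, gives $E(\tilde\omega_k) \to E(\omega)$, hence $\int_\Pi x_2 \omega = \mu$ and $\omega \in \Sigma_{\mu,\nu,\lambda}$. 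Then
\[ \|\tilde\omega_k\|_2^2 = 2\lambda\bigl(E(\tilde\omega_k) - \mathcal{E}_\lambda(\tilde\omega_k)\bigr) \to 2\lambda\bigl(E(\omega) - S_{\mu,\nu,\lambda}\bigr) = \|\omega\|_2^2, \]
so weak convergence combined with norm convergence delivers the strong $L^2$ convergence \eqref{4.4}. Since $\omega$ has compact support by Lemma~\ref{lm3.3}, combining strong $L^2$-convergence on any compact set, tightness of $\tilde\omega_k$ in both variables, and the identity $\int x_2 \tilde\omega_k \to \int x_2 \omega = \mu$ delivers \eqref{4.5}.
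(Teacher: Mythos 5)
Your overall architecture (Lions' trichotomy, exclude vanishing and dichotomy, then upgrade weak $L^2$ convergence to strong via convergence of $E$ and of the $L^2$ norm) matches the paper's, and your observation that Corollary~\ref{add-cor} plus the scaling $\omega_L^{\lambda,W}=W\omega_L^{\lambda,1}$ gives $S_{\mu,\nu,\lambda}=c_\lambda\mu^2$, hence strict superadditivity in $\mu$, is a clean alternative to the paper's way of killing dichotomy (the paper instead picks maximizers of the two sub-problems, translates their compact supports apart using Lemma~\ref{lm3.3}, and uses the strictly positive interaction energy). The critical divergence is the choice of density fed into Lemma~\ref{lm4.2}: the paper applies it to $\xi_n=x_2\omega_n$, whose total mass is exactly the constrained impulse $\mu$, while you apply it to $\omega_n/\|\omega_n\|_1$. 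Since the variational problem is pinned by the impulse (equality constraint, and $S_{\mu,\nu,\lambda}$ is strictly monotone/superadditive in $\mu$) but only bounded by the mass, concentration-compactness in the mass variable does not directly control the quantity that drives the contradiction, and this creates two genuine gaps.

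First, in the dichotomy step you assert $\bar\mu_1+\bar\mu_2=\mu$; this is unjustified, because $\|r_n\|_1\to 0$ does not imply $\int_\Pi x_2 r_n\to 0$ (a bump of mass $1/n$ at height $n$ carries unit impulse), and impulse can also drift off to $x_2=\infty$ inside $\omega_n^{(i)}$. The inequality $\bar\mu_1+\bar\mu_2\le\mu$ still salvages the superadditivity contradiction when both $\bar\mu_i>0$, but the sub-case $\alpha_2>0$, $\bar\mu_2=0$, $\bar\mu_1=\mu$ is not resolved: ``reduces us back to alternative (C) applied to $\omega_n^{(1)}$'' is not an argument, since $\omega_n^{(1)}$ is merely another maximizing sequence to which Lions' lemma could again return dichotomy, giving an infinite regress. (The paper never meets this case: with $\xi_n=x_2\omega_n$ the dichotomy pieces split the impulse by construction, $\alpha\in(0,\mu)$.) Second, in the compactness step, $L^1$-tightness of $\omega_{n_k}$ around bounded centers does \emph{not} imply tightness of $x_2\omega_{n_k}$, so your claims that $\int_\Pi x_2\omega=\mu$ and that \eqref{4.5} follows from ``tightness in both variables'' have the same hole: a vanishing amount of mass at height $\sim k$ carries $O(1)$ impulse to infinity, leaving the weak limit with $\int x_2\omega<\mu$. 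This can be repaired a posteriori --- if impulse escapes then $\limsup\mathcal{E}_\lambda(\tilde\omega_k)\le\mathcal{E}_\lambda(\omega)\le S_{\mu',\nu,\lambda}<S_{\mu,\nu,\lambda}$ with $\mu'<\mu$ by Lemma~\ref{lm2.6}, a contradiction --- but that argument must be made; it is not contained in what you wrote. The cheapest fix for both gaps is to run the whole scheme on $x_2\omega_n$, as the paper does.
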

	
	To prove Theorem \ref{lm4.1}, we need the following concentration  compactness lemma (see \cite{lions1984concentration}).
	
	\begin{lemma}\label{lm4.2}
		Let $\left\{\xi_{n}\right\}_{n=1}^{\infty}$ be a sequence of nonnegative functions in $L^{1}(\Pi)$ satisfying
		$$
		\limsup _{n \rightarrow \infty} \int_{\Pi} \xi_{n} \mathrm{dx} \rightarrow \mu,
		$$
		for some $0<\mu<\infty$. Then, after passing to a subsequence, one of the following holds:
		\begin{enumerate}
			\item [(i)] (Compactness) There exists a sequence $\left\{y_{n}\right\}_{n=1}^{\infty}$ in $\overline{\Pi}$ such that for arbitrary $\varepsilon>0$, there exists $R>0$ satisfying
			$$
			\int_{\Pi \cap B_{R}\left(y_{n}\right)} \xi_{n} \mathrm{dx} \geq \mu-\varepsilon, \quad \forall n \geq 1.
			$$
			\item [(ii)] (Vanishing) For each $R>0$,
			$$
			\lim _{n \rightarrow \infty} \sup _{y \in \Pi} \int_{B_{R}(y) \cap \Pi} \xi_{n} \mathrm{~d} x=0 .
			$$
			\item [(iii)]  (Dichotomy) There exists a constant $0<\alpha<\mu$ such that for any $\varepsilon>0$, there exist $N=N(\varepsilon) \geq 1$ and $0 \leq \xi_{i, n} \leq \xi_{n}, i=1,2$ satisfying
			$$
			\left\{\begin{array}{l}
				\left\|\xi_{n}-\xi_{1, n}-\xi_{2, n}\right\|_{1}+\left|\alpha-\int_{\Pi} \xi_{1, n} \mathrm{dx}\right|+\left|\mu-\alpha-\int_{\Pi} \xi_{2, n} \mathrm{dx}\right|<\varepsilon_{,} \quad \text { for } n \geq N, \\
				d_{n}:=\operatorname{dist}\left(\operatorname{supp}\left(\xi_{1, n}\right), \operatorname{supp}\left(\xi_{2, n}\right)\right) \rightarrow \infty, \quad \text { as } \,  n \rightarrow \infty.
			\end{array}\right.
			$$
		\end{enumerate}

	\end{lemma}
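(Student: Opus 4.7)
The plan is to apply Lions' concentration-compactness lemma (Lemma~\ref{lm4.2}) to the sequence $\{\omega_n\}$ itself, after passing to a subsequence on which $\|\omega_n\|_1 \to \tld\nu \in [0,\nu]$. The case $\tld\nu = 0$ is immediately excluded by Lemma~\ref{lm2.1}, which forces $E(\omega_n) \le C\|\omega_n\|_1^{3/2}\|\omega_n\|_2^{1/2}\to 0$ and hence $\mathcal{E}_\lambda(\omega_n) \le 0$, contradicting $S_{\mu,\nu,\lambda}>0$. So $\tld\nu>0$ and one of the three alternatives in Lemma~\ref{lm4.2} must hold. To rule out vanishing, split $G_\Pi(x,y)$ at $|x-y|=R$: the near field is controlled via H\"older's inequality in $L^{4/3}$ on $B_R(x)$ by the vanishing hypothesis combined with the $L^2$ bound, while the far field is dominated by $Ce^{-|x-y|/2}\|\omega_n\|_1$, which is small once $R$ is large. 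This yields $\|\mathcal{G}\omega_n\|_\infty\to 0$, whence $E(\omega_n)\le\|\mathcal{G}\omega_n\|_\infty\|\omega_n\|_1\to 0$, again contradicting $S_{\mu,\nu,\lambda}>0$.

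To rule out dichotomy, suppose there are $\omega_{i,n}$ with $0\le\omega_{i,n}\le\omega_n$, disjoint supports separated by $d_n\to\infty$, and $\|\omega_n-\omega_{1,n}-\omega_{2,n}\|_1\to 0$. Chebyshev applied to the impulse bound $\int x_2\omega_n\,dx = \mu_n\to\mu$ gives $\int_{x_2>M}\omega_n \le \mu_n/M$, so all three pieces are essentially concentrated in the strip $\{x_2\le M\}$, forcing the separation to occur in the $x_1$-direction. Along a further subsequence, $\mu_{i,n}:=\int x_2\omega_{i,n}\,dx \to \mu_i$ with $\mu_1+\mu_2=\mu$. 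Exponential decay of the Bessel kernel gives the cross-energy estimate $\int\omega_{1,n}\mathcal{G}\omega_{2,n}\,dx = O(e^{-d_n/2})\to 0$, while disjoint supports yield $\|\omega_n\|_2^2 \ge \|\omega_{1,n}\|_2^2+\|\omega_{2,n}\|_2^2+o(1)$, so
\begin{equation*}
S_{\mu,\nu,\lambda} \le \limsup_{n\to\infty}\bigl[\mathcal{E}_\lambda(\omega_{1,n}) + \mathcal{E}_\lambda(\omega_{2,n})\bigr]\le S_{\mu_1,\nu,\lambda} + S_{\mu_2,\nu,\lambda}.
\end{equation*}
By Corollary~\ref{add-cor}, $S_{\mu,\nu,\lambda} = \mu^2\,\mathcal{E}_\lambda(\omega_L^{\lambda,1})/I(\omega_L^{\lambda,1})^2$ is strictly convex in $\mu$, hence strictly subadditive whenever $\mu_1,\mu_2>0$; contradiction. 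The degenerate subcase $\mu_2=0$ forces $\omega_{2,n}$ to concentrate near $\partial\Pi$, and the boundary decay $G_\Pi(x,y)\le Cx_2 y_2/|x-y|^2$ combined with Lemma~\ref{lm2.1} applied to the truncated portion yields $\limsup\mathcal{E}_\lambda(\omega_{2,n})\le 0$, reducing to $S_{\mu,\nu,\lambda}\le S_{\mu_1,\nu,\lambda}$ with $\mu_1\le\mu$ and giving a contradiction via Lemma~\ref{lm2.6}.

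In the compactness case there exist $y_n\in\overline\Pi$ with $\int_{B_R(y_n)\cap\Pi}\omega_n\,dx\ge\tld\nu-\varepsilon$; the same Chebyshev bound forces $y_{n,2}$ to be bounded. Setting $c_n:=-y_{n,1}$, the translates $\tld\omega_n(x):=\omega_n(x+c_n\mathrm{e}_1)$ are tight in $L^1(\Pi)$, and along a subsequence $\tld\omega_n\weakto\omega$ in $L^2(\Pi)$. Tightness together with the energy-convergence argument already used in the proof of Lemma~\ref{lm2.5} yields $E(\tld\omega_n)\to E(\omega)$ and $\int x_2\omega\,dx = \mu$, so $\omega\in\Sigma_{\mu,\nu,\lambda}$. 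Since $\mathcal{E}_\lambda(\tld\omega_n)\to S_{\mu,\nu,\lambda}=\mathcal{E}_\lambda(\omega)$, one deduces $\|\tld\omega_n\|_2\to\|\omega\|_2$; weak plus norm convergence in the Hilbert space $L^2(\Pi)$ then promotes to the strong convergence \eqref{4.4}. Finally \eqref{4.5} follows: strong $L^2$-convergence on $\Pi$ implies strong $L^1(B_R)$-convergence for each $R$, so $x_2\tld\omega_n\to x_2\omega$ in $L^1(B_R)$, while the tails $\int_{\Pi\setminus B_R}x_2\tld\omega_n\,dx = \mu_n - \int_{B_R}x_2\tld\omega_n\,dx$ converge to $\int_{\Pi\setminus B_R}x_2\omega\,dx$, which tends to $0$ as $R\to\infty$.

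The principal obstacle is the degenerate dichotomy subcase in which one piece carries positive $L^1$ mass but vanishing impulse; handling it rests on the precise boundary behavior of $G_\Pi$ near $\{x_2=0\}$. Away from this subtlety the argument is the standard concentration-compactness plus norm-convergence scheme, crucially using the quadratic scaling of $S_{\mu,\nu,\lambda}$ from Corollary~\ref{add-cor} to get strict subadditivity.
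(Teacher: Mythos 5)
There is a fundamental mismatch: the statement you were asked to prove is Lemma \ref{lm4.2} itself, i.e.\ the abstract concentration--compactness trichotomy of P.~L.~Lions for a sequence of nonnegative $L^1$ functions whose masses converge to $\mu$. Your proposal instead \emph{invokes} Lemma \ref{lm4.2} in its very first sentence and then proves a different result, namely Theorem \ref{lm4.1} (compactness of maximizing sequences up to $x_1$-translations): excluding vanishing and dichotomy for the maximizing sequence and extracting strong convergence in the compactness case. Relative to the target statement this is circular --- nothing in your argument establishes that, after passing to a subsequence, exactly one of compactness, vanishing, or dichotomy must occur for an arbitrary sequence $\{\xi_n\}$ with $\int_\Pi \xi_n\,\mathrm{d}x \to \mu$. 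A proof of the lemma would have to introduce the concentration (L\'evy) functions $Q_n(R)=\sup_{y}\int_{B_R(y)\cap\Pi}\xi_n\,\mathrm{d}x$, use monotonicity and Helly's selection theorem to pass to a subsequence with $Q_n\to Q$ pointwise, set $\alpha=\lim_{R\to\infty}Q(R)\in[0,\mu]$, and then show that $\alpha=0$, $\alpha=\mu$, and $0<\alpha<\mu$ correspond to vanishing, compactness, and dichotomy respectively (the last requiring the explicit construction of the cut-off pieces $\xi_{1,n},\xi_{2,n}$ with diverging support separation). None of this appears in your proposal. For what it is worth, the paper itself does not prove Lemma \ref{lm4.2} either; it cites Lions (1984), so the expected content here is precisely that classical argument, not the application you wrote. (As a side remark, in the application you sketched, the scaling $S_{\mu,\nu,\lambda}\propto\mu^2$ gives strict \emph{super}additivity, $S_{\mu_1+\mu_2}>S_{\mu_1}+S_{\mu_2}$, which is what contradicts the dichotomy inequality; calling it subadditivity is backwards.)
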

	
	\begin{proof}[Proof of Theorem \ref{lm4.1}]

		Let $\xi_{n}=x_{2} \omega_{n}$. Using Lemma \ref{lm4.2},    we find that for a certain sub-sequence, still denoted by $\left\{\omega_{n}\right\}_{n=1}^{\infty}$, one of the three cases in Lemma \ref{lm4.2} should occur. To deal with the three cases, we divide the proof into three steps.
		
		Step 1. (Vanishing excluded) Suppose that for each fixed $R>0$,
		\begin{equation}\label{4.6}
			\lim _{n \rightarrow \infty} \sup _{y \in \Pi} \int_{B_{R}(y) \cap \Pi} x_{2} \omega_{n} \mathrm{~d} x=0.
		\end{equation}
		To get a contradiction, it is sufficient to prove that $\lim _{n \rightarrow \infty} E\left(\omega_{n}\right)=0$. We set
		\begin{equation*}
			\begin{split}
				2E(\omega _{n}) & =\int_{\Pi}\int_{\Pi}\omega _{n}(x){G}_{\Pi}(x, y)\omega _{n}(y)\mathrm{d} x\mathrm{d} y \\
				& =\Big(\iint_{|x-y|\ge R}+\iint_{|x-y|\le R}\Big)\omega _{n}(x){G}_{\Pi}(x, y)\omega _{n}(y)\mathrm{d} x\mathrm{d} y.
			\end{split}	
		\end{equation*}
		By \eqref{ess} we have
		\begin{equation*}
			\begin{split}
				\iint_{|x-y|\ge R}G_{\Pi}(x, y)\omega_{n} (x)\omega _{n}(y)\mathrm{d} x\mathrm{d}
				y & \le \iint_{|x-y|\ge R}G(x, y)\omega_{n} (x)\omega _{n}(y)\mathrm{d} x\mathrm{d}
				y \\
				& 	\le Ce^{-\frac{R}{2}}\nu ^{2}\to 0 \quad \text{as} \ R\rightarrow\infty.
			\end{split}
		\end{equation*}
		Set
		\begin{equation*}
			\iint_{|x-y|\le R}G_{\Pi}(x, y)\omega _{n}(x)\omega _{n}(y)\mathrm{d} x\mathrm{d} y= \Big( \iint_{ \substack  {|x-y|\le R\\ y_2\ge 1/R}}+\iint_{\substack  {|x-y|\le R\\ y_2< 1/R}} \Big)G_{\Pi}(x, y)\omega _{n}(x)\omega _{n}(y) \mathrm{d} x\mathrm{d} y .
		\end{equation*}
		By simple calculations, we get that
		\begin{equation*}
			\begin{split}
				\iint_{\substack  {|x-y|\le R\\ y_2\ge 1/R}}& G_{\Pi}(x, y)\omega _{n}(x)\omega _{n}(y)\mathrm{d} x\mathrm{d} y \le \int_\Pi \omega_n(x)\mathrm{d} x\int_{\substack  {|y-x|\le R\\ y_2\ge 1/R}}G(x, y)\omega_n(y)\mathrm{d} y \\
				& \le C\nu\|\omega_n \|_{2}^{1/2} \Big(\sup_{x\in \Pi}\int_{\substack  {|y-x|\le R\\ y_2\ge 1/R}}\omega_n(y)\mathrm{d} y \Big)^\frac{1}{2}\\
				&\le CR^\frac{1}{2} \Big(\sup_{x\in \Pi}\int_{B_R(x)}\omega_n(y)y_2\mathrm{d} y \Big)^\frac{1}{2}\to {0} \quad  \text{as}\ n\rightarrow\infty.
			\end{split}
		\end{equation*}
		In addition, we have
		\begin{equation*}
			\begin{split}
				\iint_{\substack  {|x-y|\le R\\ y_2< 1/R}}& G_{\Pi}(x, y)\omega _{n}(x)\omega _{n}(y)\mathrm{d} x\mathrm{d} y \le \int_\Pi \omega_n(x)\mathrm{d} x\int_{\substack  {|y-x|\le R\\ y_2<1/R}}G(x, y)\omega_n(y)\mathrm{d} y \\
				& \le C\nu\|\omega_n \|_{2}^{1/2} \Big(\sup_{x\in \Pi}\int_{\substack  {|y-x|\le R\\ y_2< 1/R}}G^2(x, y)\mathrm{d} y \Big)^\frac{1}{2}\\
				&\le C\nu \Big(\sup_{x\in \Pi}\int_{ y_2< 1/R}G^2(x, y)\mathrm{d} y \Big)^\frac{1}{2}\to 0\quad  \text{as} \ R\rightarrow\infty.
			\end{split}
		\end{equation*}
		Hence
		\begin{equation*}
			2E(\omega _{n})\le Ce^{-\frac{R}{2}}\nu ^{2}+CR^\frac{1}{2} \Big(\sup_{x\in \Pi}\int_{B_R(x)}\omega_n(y)y_2\mathrm{d} y \Big)^\frac{1}{2}+C\nu \Big(\sup_{x\in \Pi}\int_{ y_2< 1/R}G^2(x, y)\mathrm{d} y \Big)^\frac{1}{2}.
		\end{equation*}
		Letting $ n\rightarrow \infty $ and then $ R\rightarrow \infty $ implies $ \lim_{n\rightarrow\infty} E(\omega _{n})=0 $.

		Step 2. (Dichotomy excluded)
		Suppose that there exists some $ \alpha \in (0, \mu) $ such that
		\begin{equation}
			\left\{\begin{array}{l}
				\omega_{n}=\omega_{1, n}+\omega_{2, n}+\omega_{3, n} ,  \ 0\le \omega _{i, n}\le \omega  _{n}, \quad  i=1, 2, 3 , \\
				\left\|x_{2} \omega_{3, n}\right\|_{1}+\left|\alpha-\alpha_{n}\right|+\left|\mu-\alpha-\beta_{n}\right|\rightarrow0, \quad \text { as } n \rightarrow\infty , \\
				d_{n}:=\operatorname{dist}\left(\operatorname{supp}\left(\omega_{1, n}\right), \operatorname{supp}\left(\omega_{2, n}\right)\right) \rightarrow \infty, \quad \text { as } n \rightarrow \infty,
			\end{array}\right.
		\end{equation}
		where $\alpha_n=\|x_2\omega_{1, n}\|_1$ and  $\beta_n=\|x_2\omega_{2, n}\|_1$.
		According to the symmetry of $ E $,  we have
		$$
		\begin{aligned}
			2 E\left(\omega_{n}\right)&=2E\left(\omega_{1, n}+\omega_{2, n}+\omega_{3, n}\right) \\
			&=\int_{\Pi} \int_{\Pi} \omega_{1, n}(x) G_{\Pi}(x, y) \omega_{1, n}(y) \mathrm{d} x \mathrm{~d} y\\
			&\qquad+\int_{\Pi} \int_{\Pi} \omega_{2, n}(x) G_{\Pi}(x, y) \omega_{2, n}(y) \mathrm{d} x \mathrm{~d} y
			+2 \int_{\Pi} \int_{\Pi} \omega_{1, n}(x) G_{\Pi}(x, y) \omega_{2, n}(y) \mathrm{d} x \mathrm{~d} y\\
			&\qquad+\int_{\Pi} \int_{\Pi}\left(2 \omega_{n}-\omega_{3, n}(x)\right) G_{\Pi}(x, y) \omega_{3, n}(y) \mathrm{d} x \mathrm{~d} y .
		\end{aligned}
		$$
		For fixed $R>0$,
		$$
		\begin{aligned}
			\int_{\Pi} \int_{\Pi} &\left(2 \omega_{n}-\omega_{3, n}(x)\right) G_{\Pi}(x, y) \omega_{3, n}(y) \mathrm{d} x \mathrm{~d} y \\
			&\leq C\int_{y_2\ge 1/R} G(x, y) \omega_{3, n}(y)\mathrm{d} y+C\int_{y_2< 1/R} G(x, y) \omega_{3, n}(y)\mathrm{d} y\\
			&\le CR^\frac{1}{2}\left\|x_{2} \omega_{3, n}\right\|_{1}^{\frac{1}{2}}+C\Big(\sup_{x\in \Pi}\int_{ y_2< 1/R}G^2(x, y)\mathrm{d} y \Big)^\frac{1}{2}.
		\end{aligned}
		$$
		By \eqref{ess}, we have
		$$
		\int_{\Pi} \int_{\Pi} \omega_{1, n}(x) G_{\Pi}(x, y) \omega_{2, n}(y)\mathrm{d} x \mathrm{d} y \leq Ce^{-d_n/2}.
		$$
		Hence
		\begin{equation*}
			\begin{split}
				\mathcal{E}_{\la}(\omega _{n})&=E(\omega _{n})-\frac{1}{2\la}\int_{\Pi}\omega _{n}^{2}dx\\
				&\le \mathcal{E}_{\la}(\omega _{1, n})+\mathcal{E}_{\la}(\omega _{2, n})+CR^\frac{1}{2}\left\|x_{2} \omega_{3, n}\right\|_{1}^{\frac{1}{2}}+C\Big(\sup_{x\in \Pi}\int_{ y_2< 1/R}G^2(x, y)\mathrm{d} y \Big)^\frac{1}{2}+Ce^{-d_n/2}.
			\end{split}
		\end{equation*}
		Taking Steiner symmetrization $\omega_{i, n}^{*}$ of $\omega_{i, n}$ for $i=1,2$, we get
		$$
		\left\{ \begin{array}{l}
			\mathcal{E} _{\lambda}\left( \omega _n \right) \le \mathcal{E} _{\lambda}\left( \omega _{1,n}^{*} \right) +\mathcal{E} _{\lambda}\left( \omega _{2,n}^{*} \right) +CR^{\frac{1}{2}}\left\| x_2\omega _{3,n} \right\| _{1}^{\frac{1}{2}}+C\left( \mathop {\mathrm{sup}} \limits_{x\in \Pi}\int_{y_2<1/R}{G^2}(x,y)\mathrm{d}y \right) ^{\frac{1}{2}}+Ce^{-d_n/2},\\
			\left\| \omega _{1,n}^{*} \right\| _1+\left\| \omega _{2,n}^{*} \right\| _1\le \nu ,\quad \left\| \omega _{1,n}^{*} \right\| _2+\left\| \omega _{2,n}^{*} \right\| _2\le C,\\
			\left\| x_2\omega _{1,n}^{*} \right\| _1=\alpha _n,\quad \,\,\left\| x_2\omega _{2,n}^{*} \right\| _1=\beta _n.\\
		\end{array} \right. 
		$$
		We  assume that $\omega_{i, n}^{*} \rightarrow \omega_{i}^{*}$ weakly in $L^{2}(\Pi)$ as $ n\rightarrow \infty $ for $i=1,2$. Proceeding as in the proof  of Lemma \ref{lm2.5}, we can obtain	the convergence of the kinetic energy
		\begin{equation*}
			\lim _{n \rightarrow \infty} E\left(\omega_{i, n}^{*}\right)=E\left(\omega_{i}^{*}\right), \quad  \text { for } i=1,2.
		\end{equation*}
		By first letting $n \rightarrow \infty$, then $R \rightarrow \infty$, we obtain
		$$
		\left\{\begin{array}{l}
			S_{\mu, \nu, \lambda} \leq \mathcal{E}_{\la}\left(\omega_{1}^{*}\right)+\mathcal{E}_{\la}\left(\omega_{2}^{*}\right), \\
			\left\|\omega_{1}^{*}\right\|_{1}+\left\|\omega_{2}^{*}\right\|_{1} \leq \nu,\quad \left\|\omega_{1}^{*}\right\|_{2}+\left\|\omega_{2}^{*}\right\|_{2} \leq C, \\
			\left\|x_{2} \omega_{1}^{*}\right\|_{1} \leq \alpha,\quad \left\|x_{2} \omega_{2}^{*}\right\|_{1} \leq \mu-\alpha.
		\end{array}\right.
		$$
		We set $\alpha_{1}=\left\|x_{2} \omega_{1}^{*}\right\|_{1} \leq \alpha$, $\nu_{1}=\left\|\omega_{1}^{*}\right\|_{1}$, $\beta_{1}=\left\|x_{2} \omega_{2}^{*}\right\|_{1} \leq \mu-\alpha$ and $\nu_{2}=\left\|\omega_{2}^{*}\right\|_{1}$. It holds
		$$
		\alpha_{1}>0,\quad   \beta_{1}>0 .
		$$
		In fact, suppose that $\alpha_{1}=0$, then we have $\omega_{1}^{*} \equiv 0$, and hence
		$$
		S_{\mu, \nu, \lambda} \leq \mathcal{E}_{\la}\left(\omega_{1}^{*}\right)+\mathcal{E}_{\la}\left(\omega_{2}^{*}\right) \leq \mathcal{E}_{\la}\left(\omega_{2}^{*}\right) \leq S_{\beta_1, \nu, \lambda}.
		$$
		This is a contradiction to Lemma \ref{lm2.6}. Similarly, one can verify $\beta_{1}>0$. We choose $\hat{\omega}_{1} \in \Sigma_{\alpha_{1}, \nu_{1},\lambda}, \hat{\omega}_{2} \in \Sigma_{\beta_{1}, \nu_{2},\lambda}$. By Lemma \ref{lm3.3}, we have that supports of $\hat{\omega}_{i}, i=1,2$ are bounded. Therefore, we may assume that $\operatorname{supp}\left(\hat{\omega}_{1}\right) \cap \operatorname{supp}\left(\hat{\omega}_{2}\right)=\varnothing$ by suitable translations in $x_{1}$-direction. Letting $\hat{\omega}=\hat{\omega}_{1}+\hat{\omega}_{2}$, then we have
		\begin{equation*}
			\left\{\begin{array}{l}
				\int_{\Pi} \hat{\omega} \mathrm{d} x=\int_{\Pi} \hat{\omega}_{1} \mathrm{~d} x+\int_{\Pi} \hat{\omega}_{2} \mathrm{~d} x \leq \nu,  \\
				\int_{\Pi} x_{2} \hat{\omega} \mathrm{d} x=\int_{\Pi} x_{2} \hat{\omega}_{1} \mathrm{~d} x+\int_{\Pi} x_{2} \hat{\omega}_{2} \mathrm{~d} x=\alpha_{1}+\beta_{1} \leq \mu,
			\end{array}\right.
		\end{equation*}
		which implies that $\hat{\omega} \in \mathcal{A}_{\alpha_{1}+\beta_{1}, \nu}$. Observing that $\hat{\omega}_1\not \equiv 0$ and $\hat{\omega}_{2} \not \equiv 0$,  we have
		$$
		\begin{aligned}
			S_{\mu, \nu, \lambda} & \leq \mathcal{E}_{\la}\left(\omega_{1}^{*}\right)+\mathcal{E}_{\la}\left(\omega_{2}^{*}\right) \\
			&\leq \mathcal{E}_{\la}\left(\hat{\omega}_{1}\right)+\mathcal{E}_{\la}\left(\hat{\omega}_{2}\right)\\
			&=\mathcal{E}_{\la}(\hat{\omega})- 2\int_{\Pi} \int_{\Pi} \hat{\omega}_{1}(x) G_{\Pi}(x, y) \hat{\omega}_{2}(y) \mathrm{d} x \mathrm{d} y \\
			&<S_{\alpha_{1}+\beta_{1}, \nu, \lambda} \leq S_{\mu, \nu, \lambda},
		\end{aligned}
		$$
		which is a contradiction.

		Step 3. (Compactness) Assume that there is a sequence $\left\{y_{n}\right\}_{n=1}^{\infty}$ in $\overline{\Pi}$ such that for arbitrary $\varepsilon>0$, there exists $R>0$ satisfying
		\begin{equation}\label{4.8}
			\int_{\Pi \cap B_{R}\left(y_{n}\right)} x_{2} \omega_{n} \mathrm{~d} x \geq \mu-\varepsilon, \quad \forall\, n \geq 1.
		\end{equation}
		We may assume that $y_{n}=\left(0, y_{n, 2}\right)$ after a suitable $x_{1}$-translation. We claim that
		\begin{equation}\label{4.9}
			\sup _{n \geq 1} y_{n, 2}<\infty .
		\end{equation}
		Indeed,  if \eqref{4.9} is false,  then  there exists a subsequence, still denoted by $\left\{y_{n, 2}\right\}$, such that
		$$
		\lim _{n \rightarrow \infty} y_{n, 2}=\infty .
		$$
		By direct calculation, we have
		$$
		\begin{aligned}
			2 E\left(\omega_{n}\right) &=\int_{\Pi} \omega_{n}(x) \mathcal{G} \omega_{n}(x) \mathrm{d} x \\
			&=\int_{\Pi \cap B_{R}\left(y_{n}\right)} \omega_{n}(x) \mathcal{G} \omega_{n}(x) \mathrm{d} x+\int_{\Pi \backslash B_{R}\left(y_{n}\right)} \omega_{n}(x) \mathcal{G}\omega_{n}(x) \mathrm{d} x .
		\end{aligned}
		$$
		Since $\left\{\omega_{n}\right\}_{n=1}^{\infty}$ is uniformly bounded in $L^{2}(\Pi)$,  $\left\|x_{2} \omega_{n}\right\|_{1} \leq \mu+o(1)$ and \eqref{2.5},  we have
		$$
		\int_{\Pi \cap B_{R}\left(y_{n}\right)} \omega_{n}(x) \mathcal{G} \omega_{n}(x) \mathrm{d} x \leq \frac{C\mu}{(y_{n, 2}+1-R)^{1/2}}\to 0 \quad  \text{as} \  n\to\infty.
		$$
		For any fixed $M>0$ large, we have
		\begin{equation}\label{4.10}
			\begin{split}
				&\int_{\Pi \backslash B_{R}\left(y_{n}\right)} \omega_{n}(x) \mathcal{G}\omega_{n}(x) \mathrm{d} x \\
				&\leq C\int_{\substack  { \Pi \backslash B_{R}\left(y_{n}\right)\\ y_2< 1/M}} G(x, y) \omega_{ n}(y)\mathrm{d} y+C\int_{\substack  { \Pi \backslash B_{R}\left(y_{n}\right)\\ y_2< 1/M}} G(x, y) \omega_{3, n}(y)\mathrm{d} y\\
				&\le CM^\frac{1}{2}\left\|x_{2} \omega_{ n}1_{B_{R}\left(y_{n}\right) }\right\|_{1}^{\frac{1}{2}}+C\Big(\sup_{x\in \Pi}\int_{ y_2< 1/M}G^2(x, y)\mathrm{d} y \Big)^\frac{1}{2} \\
				&\le CM^\frac{1}{2}\varepsilon^\frac{1}{2}+C\Big(\sup_{x\in \Pi}\int_{ y_2< 1/M}G^2(x, y)\mathrm{d} y \Big)^\frac{1}{2}.
			\end{split}
		\end{equation}
		Hence, by first letting $n \rightarrow \infty$, then $\varepsilon \rightarrow 0$ and lastly $M \rightarrow \infty$, we obtain
		$$
		0<S_{\mu, \nu, \lambda}\leq \lim _{n \rightarrow \infty} E\left(\omega_{n}\right)=0.
		$$
		The claim \eqref{4.9} is thus proved. We may assume that  $ y_{n, 2}=0 $ by taking $ R $ larger. Therefore,  we have
		$$
		\int_{\Pi \cap B_{R}(0)} x_{2} \omega_{n} \mathrm{d} x \geq \mu-\varepsilon, \quad \forall\, n \geq 1 .
		$$
		Since $\left\{\omega_{n}\right\}$ is uniformly bounded in $L^{2}$, by choosing a subsequence, $\omega_{n} \rightarrow \omega$ weekly in $L^{2}$ for some $\omega$. By sending $n \rightarrow \infty$,
		$$
		\int_{\Pi}\omega \mathrm{d} x\le \nu, \quad 
		\int_{\Pi} x_{2} \omega \mathrm{d} x=\mu.
		$$
		Hence $\omega \in \mathcal{A}_{\mu, \nu}$. Let us assume that
		\begin{equation}\label{4.11}
			\lim _{n \rightarrow \infty} E\left(\omega_{n}\right)=E(\omega),
		\end{equation}
		which implies
		\begin{align*}
			S_{\mu, \nu, \lambda}&=\lim _{n \rightarrow \infty} \mathcal{E}_{\la}\left(\omega_{n}\right)\\
			&\leq \lim _{n \rightarrow \infty} E\left(\omega_{n}\right)-\frac{1}{2\la} \liminf _{n \rightarrow \infty}\left\|\omega_{n}\right\|_{2}^{2} \\
			&\leq \mathcal{E}_{\la}(\omega)\leq S_{\mu, \nu, \lambda}.
		\end{align*}
		Hence $\lim _{n \rightarrow \infty}\left\|\omega_{n}\right\|_{2}=\|\omega\|_{2}$ and $\omega_{n} \rightarrow \omega$ in $L^{2}$ follows. By
		\begin{align*}
			\int_{\Pi} x_{2}\left|\omega_{n}-\omega\right| \mathrm{d} x &=\int_{\Pi \cap B_{R}(0)} x_{2}\left|\omega_{n}-\omega\right| \mathrm{d} x+\int_{\Pi \backslash B_{R}(0)} x_{2}\left|\omega_{n}-\omega\right| \mathrm{d} x \\
			& \leq C R^{2}\left\|\omega_{n}-\omega\right\|_{2}+\int_{\Pi \backslash B_{R}(0)} x_{2}\left(\omega_{n}+\omega\right) \mathrm{d} x \\
			& \leq C R^{2}\left\|\omega_{n}-\omega\right\|_{2}+\mu_{n}-\mu+2 \varepsilon .
		\end{align*}
		Sending $n \rightarrow \infty$ and then $\varepsilon \rightarrow 0$,  the above inequality implies $x_{2} \omega_{n} \rightarrow x_{2} \omega$ in $L^{1}(\Pi)$. Since $\mathcal{E}_{\la}\left(\omega_{n}\right)\rightarrow \mathcal{E}_{\la}(\omega)$, the limit $\omega \in \mathcal{A}_{\mu, \nu}$ is a maximizer of $S_{\mu, \nu}$.
		
		It remains to show the assumption \eqref{4.11}. On the one hand,  for any fixed $ M>0 $ large,  we have
		\begin{equation*}
			\begin{split}
				2E(\omega _{n})&=\int_{\Pi}\int_{\Pi}\omega _{n}(x)G_{\Pi}(x, y)\omega _{n}(y)\mathrm{d}x\mathrm{d}y\\
				&\le \int_{\Pi\cap B_{R}(0)}\int_{\Pi\cap B_{R}(0)}\omega _{n}(x)G_{\Pi}(x, y)\omega _{n}(y)\mathrm{d}x\mathrm{d}y\\
				&\ \ \ \ \ \ \ +2\int_{\Pi\backslash B_{R}(0)}\int_{\Pi}\omega _{n}(x)G_{\Pi}(x, y)\omega _{n}(y)\mathrm{d}x\mathrm{d}y\\
				&\le \int_{\Pi\cap B_{R}(0)}\int_{\Pi\cap B_{R}(0)}\omega _{n}(x)G_{\Pi}(x, y)\omega _{n}(y)\mathrm{d}x\mathrm{d}y\\
				& \ \ \ \ \ \ \  \ \ +CM^\frac{1}{2}\left\|x_{2} \omega_{ n}1_{\Pi\backslash B_{R}(0)}\right\|_{1}^{\frac{1}{2}}+C\left(\sup_{x\in \Pi}\int_{ y_2< 1/M}G^2(x, y)\mathrm{d} y \right)^\frac{1}{2} .
			\end{split}
		\end{equation*}
		Letting $ n\rightarrow\infty $,  then $ \ep\rightarrow0 $ and lastly $ M\rightarrow\infty $,  we get
		$$
		\limsup _{n \rightarrow \infty} E\left(\omega_{n}\right) \leq E(\omega).
		$$
		On the other hand, for any $L>0$, we have
		\begin{align*}
			2 E\left(\omega_{n}\right)&=\int_{\Pi} \int_{\Pi} \omega_{n}(x) G_{\Pi}(x, y) \omega_{n}(y) \mathrm{d} x \mathrm{d} y \\
			&\geq \int_{\Pi \cap B_{L}(0)} \int_{\Pi \cap B_{L}(0)} \omega_{n}(x) G_{\Pi}(x, y) \omega_{n}(y) \mathrm{d} x \mathrm{d} y,
		\end{align*}
		which implies
		\begin{equation*}
			\liminf_{n \rightarrow \infty}E(\omega _{n})\ge E(\omega ).
		\end{equation*}
		The proof of \eqref{4.11} is thus completed.
	\end{proof}
	
	\section{Orbital Stability}\label{sect5}
	In this section, we establish the orbital stability of the Lamb dipoles $\omega_{L}$. Recalling Corollary \ref{add-cor}, Theorem \ref{thm1.4} follows from the following result.
	\begin{theorem}\label{lm5.1}
		Let $\lambda>1$, $\mu>0$ and $\nu\ge \nu_0$. Then for any $\varepsilon>0$, there exists $\delta>0$ such that for any non-negative function $\zeta_{0} \in L^{1} \cap L^{2}(\Pi)$ and
		\begin{equation*}
			\inf _{\omega \in \Sigma_{\mu, \nu, \lambda}}\left\{\left\|\zeta_{0}-\omega\right\|_{L^1\cap L^2}+\left\|x_{2}\left(\zeta_{0}-\omega\right)\right\|_{L^1}\right\} \leq \delta,
		\end{equation*}
		if there exists a $L^{2}$-regular solution $\zeta(t)$ with initial data ${\zeta_{0}}$, then
		\begin{equation}\label{5.2}
			\inf _{\omega \in \Sigma_{\mu, \nu, \lambda}}\left\{\left\|\zeta(t)-\omega\right\|_{L^1\cap L^2}+\left\|x_{2}\left(\zeta(t)-\omega\right)\right\|_{L^1}\right\} \leq \delta
		\end{equation}
		for all $t\ge 0$.
	\end{theorem}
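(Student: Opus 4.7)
The strategy is a contradiction argument based on Theorem \ref{lm4.1} and the conservation laws satisfied by $L^2$-regular solutions. Suppose the theorem fails. Then there exist $\varepsilon_0>0$, nonnegative initial data $\zeta_{0,n}\in L^1\cap L^2(\Pi)$, $L^2$-regular solutions $\zeta_n(t)$ with $\zeta_n(0)=\zeta_{0,n}$, and times $t_n>0$ such that
\begin{equation*}
  d_n^{0}:=\inf_{\omega\in\Sigma_{\mu,\nu,\lambda}}\big(\|\zeta_{0,n}-\omega\|_{L^1\cap L^2}+\|x_2(\zeta_{0,n}-\omega)\|_{L^1}\big)\to 0,
\end{equation*}
while the analogous distance $d_n^{t_n}$ of $\zeta_n(t_n)$ from $\Sigma_{\mu,\nu,\lambda}$ exceeds $\varepsilon_0$ for every $n$.

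By Corollary \ref{add-cor}, every element of $\Sigma_{\mu,\nu,\lambda}$ is a translate of the single dipole $\omega_L^{\lambda,W}$ with $W=\mu/I(\omega_L^{\lambda,1})$, so all elements share the same $L^1$ norm $\mu\varrho(\lambda)$, $L^2$ norm, impulse $\mu$, and energy $\mathcal{E}_\lambda=S_{\mu,\nu,\lambda}$. Since $d_n^{0}\to 0$ and $E$ is continuous on bounded subsets of $L^1\cap L^2$ (a direct consequence of the estimate $|E(\omega)-E(\omega')|\le C(\|\omega\|_1+\|\omega'\|_1)^{1/2}(\|\omega\|_2+\|\omega'\|_2)^{1/2}\|\omega-\omega'\|_1$ derived from Lemma \ref{lm2.1}), the initial data satisfy
\begin{equation*}
  \|\zeta_{0,n}\|_1\to\mu\varrho(\lambda),\qquad \|\zeta_{0,n}\|_2\to\|\omega_L\|_2,\qquad\int_\Pi x_2\zeta_{0,n}\,\mathrm{d} x\to\mu,\qquad \mathcal{E}_\lambda(\zeta_{0,n})\to S_{\mu,\nu,\lambda}.
\end{equation*}
Because $\zeta_n$ is $L^2$-regular, conservation of $\|\cdot\|_s$ for $s\in[1,2]$, of $I$, and of $E$ transfers these properties verbatim to the snapshot sequence $\omega_n:=\zeta_n(t_n)$. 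In particular, since $\nu\ge\nu_0>\mu\varrho(\lambda)$ (Lemma \ref{lm3.2}), one has $\|\omega_n\|_1\le\nu$ for all sufficiently large $n$, while $\|\omega_n\|_2$ is uniformly bounded, $\int_\Pi x_2\omega_n\to\mu$, and $\mathcal{E}_\lambda(\omega_n)\to S_{\mu,\nu,\lambda}$. Thus $\{\omega_n\}$ is a maximizing sequence in the sense of Theorem \ref{lm4.1}.

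Applying Theorem \ref{lm4.1}, there exist a subsequence $\omega_{n_k}$, shifts $c_k\in\mathbb{R}$, and some $\omega^*\in\Sigma_{\mu,\nu,\lambda}$ with
\begin{equation*}
  \omega_{n_k}(\cdot+c_k\mathrm{e}_1)\to\omega^* \text{ in } L^2(\Pi),\qquad x_2\omega_{n_k}(\cdot+c_k\mathrm{e}_1)\to x_2\omega^* \text{ in } L^1(\Pi).
\end{equation*}
To upgrade this to the norm appearing in \eqref{5.2} we still need $L^1$ convergence of the shifted vorticities themselves. This follows from Brezis--Lieb (or Scheff\'e): passing to a further subsequence we have pointwise a.e. convergence of the nonnegative functions, and $\|\omega_{n_k}\|_1=\|\zeta_{0,n_k}\|_1\to\mu\varrho(\lambda)=\|\omega^*\|_1$, so the $L^2$ convergence combined with convergence of $L^1$ norms forces $\omega_{n_k}(\cdot+c_k\mathrm{e}_1)\to\omega^*$ in $L^1$ as well. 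This contradicts $d_{n_k}^{t_{n_k}}>\varepsilon_0$.

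The main obstacle is the bookkeeping in Step 2: one has to certify that the invariants of $\zeta_n(t)$ are asymptotically exactly those of the Lamb dipole, so that Theorem \ref{lm4.1} is applicable with the same parameters $(\mu,\nu,\lambda)$ and the same supremum value $S_{\mu,\nu,\lambda}$. The continuity of $\mathcal{E}_\lambda$ is immediate from Lemma \ref{lm2.1}, while the $L^1$-norm slack $\nu-\mu\varrho(\lambda)>0$ guaranteed by Lemma \ref{lm3.2} is precisely what allows the perturbed initial data to remain admissible; no quantitative dependence of $\delta$ on $\varepsilon$ needs to be tracked, since the proof is by contradiction.
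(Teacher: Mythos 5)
Your proposal is correct and follows essentially the same contradiction argument as the paper: transfer the conserved quantities from $\zeta_{0,n}$ to the snapshots $\zeta_n(t_n)$ using the definition of an $L^2$-regular solution, check that these form a maximizing sequence (with $\|\zeta_{0,n}\|_1\to\mu\varrho(\lambda)<\nu$ guaranteeing admissibility), and invoke the compactness Theorem \ref{lm4.1}. Your additional Scheff\'e/Brezis--Lieb step recovering plain $L^1$ convergence of the shifted snapshots is a genuine and worthwhile refinement, since Theorem \ref{lm4.1} only yields $L^2$ and $x_2$-weighted $L^1$ convergence while the contradiction hypothesis \eqref{5.3} involves the full $L^1\cap L^2$ norm --- a point the paper's own proof passes over without comment.
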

	\begin{proof}
		We argue by contradiction. Suppose that the statement were false. Then there exists $\varepsilon_{0}>0$ such that for $n \geq 1$, there exist $\zeta_{0, n} \in L^{1} \cap L^{2}(\Pi)$ satisfying
		\begin{equation*}
			\inf _{\omega \in \Sigma_{\mu, \nu, \lambda}}\left\{\left\|\zeta_{0, n}-\omega\right\|_{L^1\cap L^2}+\left\|x_{2}\left(\zeta_{0, n}-\omega\right)\right\|_{L^1}\right\} \leq \frac{1}{n},
		\end{equation*}
		and
		\begin{equation}\label{5.3}
			\inf _{\omega \in \Sigma_{\mu, \nu, \lambda}}\left\{\left\|\zeta(t)-\omega\right\|_{L^1\cap L^2}+\left\|x_{2}\left(\zeta(t)-\omega\right)\right\|_{L^1}\right\} \ge \varepsilon_0,
		\end{equation}
		where $ \zeta_{n}(t) $ is a $ L^{2} $-regular solution with the initial data $ \zeta_{0, n} $.
		We take $\omega_{n} \in \Sigma_{\mu, \nu, \lambda}$ such that
		\begin{equation*}
			\left\|\zeta_{0, n}-\omega_n\right\|_{L^1\cap L^2}+\left\|x_{2}\left(\zeta_{0, n}-\omega_n\right)\right\|_{L^1}\to 0	\quad  \text{as}\ n\rightarrow\infty.
		\end{equation*}
		It is not hard to verify that
		$$
		\mathcal{E}_{\lambda}\left(\zeta_{0, n}\right) \rightarrow S_{\mu, \nu, \lambda}.
		$$
		We write $\zeta_{n}=\zeta_{n}\left(t_{n}\right)$ by suppressing $t_{n}$.    By the conservation laws,  one has
		\begin{equation*}
			\left\{ \begin{array}{l}
				\zeta _n\ge 0,\,\,\zeta _n\in L^1\cap L^2(\Pi ),\,\,\int_{\Pi}{\zeta _n}\mathrm{d}x\le \nu ,\,\,\left\| \zeta _n \right\| _2\le C,\\
				\mu _n=\int_{\Pi}{x_2}\zeta _n\mathrm{d}x\rightarrow \mu , \, \,\,\,  \, \mathrm{as} \,\, n\rightarrow \infty ,\\
				\mathcal{E}_{\lambda}\left( \zeta _n \right) \rightarrow S_{\mu ,\nu, \lambda}, \qquad  \quad   \mathrm{as} \,\, n\rightarrow \infty .\\
			\end{array} \right. 
		\end{equation*}
		By Theorem \ref{lm4.1},  there exist $ \omega \in \Sigma_{\mu, \nu,\ \lambda} $,  a subsequence $ \{\zeta_{n_{k}}\} _{k=1}^{\infty}$ and a sequence of real number $ \{c_{k}\}_{k=1}^{\infty} $ such that
		$$
		\left\|\zeta_{n_{k}}\left(\cdot+c_{k} \mathbf{e}_{1}\right)-\omega\right\|_{2}+\left\|x_{2}\left(\zeta_{n_{k}}\left(\cdot+c_{k} \mathbf{e}_{1}\right)-\omega\right)\right\|_{1} \to 0, \quad  \text{as}\ k\rightarrow\infty,
		$$
		which is contrary to \eqref{5.3},  and the proof of Theorem \ref{lm5.1} is thus completed.
	\end{proof}

	\subsection*{Acknowledgment}
	
	\noindent  { This work was supported by NNSF of China Grant 11831009 and 12201525. }

\end{document}